\newtheorem*{theorem*}{Theorem}
\newtheorem*{remark*}{Remark}
\theoremstyle{plain}
\newtheorem{theorem}{Theorem}
\newtheorem{lemma}[theorem]{Lemma} % lemmas, corollaries, etc., are numbered consecutively
\newtheorem{corollary}[theorem]{Corollary}
\newtheorem{proposition}[theorem]{Proposition}
\newtheorem{conjecture}[theorem]{Conjecture}
\theoremstyle{definition}
\newtheorem{remark}[theorem]{Remark}
\newtheorem{definition}[theorem]{Definition}
\newtheorem{example}[theorem]{Example}
\newtheorem{examples}[theorem]{Examples}
\DeclareMathOperator{\Gal}{Gal}
\DeclareMathOperator{\GL}{GL}
\DeclareMathOperator{\SL}{SL}
\DeclareMathOperator{\Aut}{Aut}
\DeclareMathOperator{\sign}{sign}
\DeclareMathOperator{\rad}{rad}
\begin{document}
%	\date{\today}
%title
\title{Solving $a x^p + b y^p = c z^p$ with $abc$ containing an arbitrary number of prime factors}

\subjclass[2010]{ 11D41, 11F33, 11F80, 11G05 (primary),    11D61, 11R18 (secondary)}

%authors
\author{Luis Dieulefait}
\thanks{The first author is partially supported by MICINN grant MTM2015-66716-P}

\address[A1]{Departament de Matem\`atiques i Inform\`atica\\ 
Universitat de Barcelona, \\
Gran Via de les Corts Catalanes, 585, 08007, Barcelona, Spain}

\author{Eduardo Soto}
\thanks{The second author is partially supported by MICINN grant MTM2016-78623-P}

%Adresses
\address[A2]{Departament de Matem\`atiques i Inform\`atica\\ 
Universitat de Barcelona,\\
Gran Via de les Corts Catalanes, 585, 08007, Barcelona, Spain}

\email[A1]{ldieulefait@ub.edu}
\email[A2]{edusoto91@gmail.com}
\begin{titlingpage}

\maketitle

%abstract
\begin{abstract}
	In this paper we prove new cases of the asymptotic Fermat equation with coefficients.
	This is done by solving some remarkable $S$-unit equations and applying a method of 
	Frey-Kraus-Mazur.
\end{abstract}
\end{titlingpage}
%INTRODUCTION

\section*{Acknowledgements} 
	We would like to thank Samuele Anni, Henri Cohen, Nuno Freitas, Roberto Gualdi, 
	Xavier Guitart, Mariagiulia De Maria, Artur Travesa, Carlos de Vera and Gabor Wiese for 
	helpful conversations and comments. The second author is very grateful to Marc Masdeu 	and Alberto 
	Soto  for their help on computational aspects.
	We would like to thank the anonymous referees for a thorough reading of our paper, and for the 
	numerous helpful suggestions they made to improve the exposition.

\section*{Introduction}
	Let $p$ be a rational prime and consider the degree $p$ Fermat equation
		\begin{equation}\label{eq}
			x^p +  y^p +  z^p = 0.
		\end{equation}
	The group $\mathbb Q^\times$ acts on the set of rational solutions 
	of \eqref{eq} by 
	\[
	\lambda(x,y,z) = (\lambda x, \lambda y, \lambda z), \qquad \lambda \in \mathbb Q^\times.
	\]
	That allows us to consider solutions  in the rational projective plane 
	\[
	\mathbb P_2(\mathbb Q) = (\mathbb Q^3\setminus \boldsymbol{0})/ \mathbb Q^\times,
	\]
	That is, equation \eqref{eq} defines a projective plane curve $F_p$ in $\mathbb P_2$.
	
	By the \emph{genus-degree} formula $F_p$ has genus 
	\[
		g_p=(p-1)(p-2)/2.
	\]
	Faltings' theorem \cite{Faltings} states that
	the set $F_p(\mathbb Q)$ of $\mathbb Q$-rational points of $F_p$ is finite if $g_p\geq 2$.
	Genus $0$ and genus $1$ curves, corresponding to $p=2$ and $p=3$ respectively, 
	might have infinitely many rational points.
	The main goal in this paper is to prove a finiteness statement hence, we will
 	avoid the case $p\leq 3$. 

	Fermat's last Theorem predicted that 
	\[
	F:= \bigcup_{p\geq 5} F_p(\mathbb Q) =\{[1:-1:0], [1:0:-1], [0:1:-1]\}.
	\]
	In this paper we are interested in the finiteness of $F$ and we shall generalize it to Fermat equations with coefficients. 
	Let $a, b, c$ be non-zero integers and let $F^{a, b, c}_p$ denote the projective curve given by
	\[
	ax^p + b y^p + c z^p=0.
	\]
	The Asymptotic Fermat Conjecture with coefficients $a, b, c$ predicts that 
		\begin{conjecture}\label{conjecture}
			The set
			\[
				AF_{a, b, c} := \bigcup_{p\geq 5} F^{a, b, c}_p(\mathbb Q) 
			\]	
			is finite.
		\end{conjecture}
	It is straightforward to see that the set of \emph{trivial points} in $AF_{a, b, c}$, i.e. points $[x:y:z]$ satisfying 
	$xyz=0$, is finite.	
			
	The very first non-trivial evidence of Conjecture \ref{conjecture} was established by Andrew 
	Wiles when proving Taniyama-Shimura conjecture for the semistable case and hence 
	proving the case $a=b=c=1$.

		\begin{theorem*}[Wiles \cite{Wiles}]
			\[AF_{1,1,1} = \{[1:0:-1],[1:-1:0],[0:1:-1]\}.\]
		\end{theorem*}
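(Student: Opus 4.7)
The plan is to follow the celebrated Frey--Ribet--Wiles strategy. The inclusion $\{[1:0:-1],[1:-1:0],[0:1:-1]\}\subseteq AF_{1,1,1}$ is immediate since $p$ is odd, so it suffices to prove the reverse inclusion. Assume for contradiction that for some prime $p\geq 5$ there exists a point $[x:y:z]\in F_p(\mathbb Q)$ with $xyz\neq 0$. Clearing denominators and using primitivity, I may take $x,y,z$ to be pairwise coprime non-zero integers satisfying $x^p+y^p+z^p=0$, and after permuting the coordinates and changing signs arrange $y$ to be even and $x\equiv -1\pmod 4$.

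Next I would attach to this putative solution the Frey--Hellegouarch elliptic curve
\[
E_{x,y,z}\colon Y^2 = X(X-x^p)(X+y^p)
\]
over $\mathbb Q$. A direct computation of a minimal Weierstrass model shows that $E_{x,y,z}$ is semistable with conductor $N=\prod_{\ell\mid xyz}\ell$ and minimal discriminant $\Delta = 2^{-8}(xyz)^{2p}$; crucially, $p\mid v_\ell(\Delta)$ for every odd prime $\ell\mid N$. By Mazur's theorem on rational isogenies of prime degree, for $p\geq 5$ the mod-$p$ Galois representation $\bar\rho_{E,p}\colon G_{\mathbb Q}\to \GL_2(\mathbb F_p)$ is absolutely irreducible.

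Now I would invoke Wiles's modularity theorem: every semistable elliptic curve over $\mathbb Q$ is modular, so $\bar\rho_{E,p}$ arises from a weight-$2$ newform of level $N$. Combining the irreducibility with the divisibilities $p\mid v_\ell(\Delta)$ at each odd $\ell\mid N$, Ribet's level-lowering theorem applies and strips every odd prime from the level, producing a weight-$2$ newform of level $2$ with the same mod-$p$ representation as $E_{x,y,z}$. But $\dim S_2(\Gamma_0(2))=0$, since $X_0(2)$ has genus zero, yielding the desired contradiction.

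The main obstacle is of course the modularity step, which constitutes the deep content of Wiles's theorem: it proceeds via a numerical criterion for an isomorphism between a universal deformation ring $R$ and a Hecke algebra $\mathbb T$, with the verification of ``$R=\mathbb T$'' in both the ordinary and the flat cases (via the Taylor--Wiles patching argument) being the technical heart of the proof. The remaining ingredients, namely the Frey construction, Mazur's isogeny theorem, Ribet's level-lowering, and the genus calculation for $X_0(2)$, are, by comparison, standard inputs.
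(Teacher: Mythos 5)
The paper offers no proof of this statement---it is Fermat's Last Theorem, quoted verbatim with a citation to \cite{Wiles}---and your sketch is exactly the standard Frey--Serre--Ribet--Wiles argument that the citation stands for (Frey curve $Y^2=X(X-x^p)(X+y^p)$, semistability and minimal discriminant $2^{-8}(xyz)^{2p}$, irreducibility of $\bar\rho_{E,p}$, modularity, level-lowering to level $2$, and $\dim S_2(\Gamma_0(2))=0$), so it is correct as an outline and consistent with the paper. One caveat: irreducibility for $p\geq 5$ is not a direct consequence of Mazur's theorem on rational $p$-isogenies, since rational $p$-isogenies do exist for $p=5,7,11,13,17,19$ (and a few larger $p$); the standard argument instead uses that the Frey curve is semistable with full rational $2$-torsion, so a rational $p$-isogeny would force $E$ or the isogenous curve to have torsion containing $\Z/2\times\Z/2p$, contradicting Mazur's torsion theorem (this is Proposition 6 of \cite{Serre87}, which the paper itself invokes elsewhere for curves with full rational $2$-torsion).
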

	
	\begin{remark*}
		Case $p=3$ of Fermat's last Theorem was proved by Leonhard Euler. 
	\end{remark*}
	
	Jean-Pierre Serre, Barry Mazur and Gerhard Frey had previously established some cases of 
	the conjecture, conditionally on  Serre's conjecture or Taniyama-Shimura conjecture; both proved now.
	
	\begin{theorem*}[Serre \cite{Serre87}]\label{serrethaf}
		Let $n$ be a non-negative integer and let $q$ be a prime in 
		\[
			\{3,5,7,11,13,17,19,23,29,53,59\}.
		\]
		Then 
		\[
			AF_{1,1,q^n} \subseteq F^{1,1,q^n}_5(\mathbb Q) \cup F^{1,1,q^n}_7(\mathbb Q) \cup  
			F^{1,1q^n}_q(\mathbb Q)\cup \{\text{trivial points}\}.
		\]
	\end{theorem*}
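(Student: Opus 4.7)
\medskip

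\noindent\emph{Proof plan.} The strategy is the classical Frey--Hellegouarch--Ribet--Wiles attack. Suppose for contradiction that $[x:y:z]$ is a non-trivial point of $F_p(\mathbb Q)$ for some $p\geq 5$ with $p \notin \{5,7,q\}$. Since the coefficients $(1,1,q^n)$ are primitive, I may take $(x,y,z)\in\Z^3$ pairwise coprime with $xyz \neq 0$, and after permutation and sign change arrange $x\equiv -1 \pmod 4$ and $2\mid y$.

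The first step is to attach the Frey curve
\[
   E = E_{(x,y,z)}\colon\quad Y^2 \;=\; X\,(X-x^p)(X+y^p),
\]
whose minimal discriminant satisfies $v_\ell(\Delta_E^{\min}) \equiv 0 \pmod p$ at every prime $\ell\mid xyz$ with $\ell\notin\{2,q\}$; consequently the conductor of $E$ has the shape $2^{a} q^{b}\cdot\prod_{\ell\mid xyz,\,\ell\neq 2,q}\ell$ for controlled exponents $a,b$. By the modularity theorem of Wiles (extended to all elliptic curves over $\mathbb Q$ by Breuil--Conrad--Diamond--Taylor), $E$ is modular. A refinement of Mazur's theorem on rational isogenies, tailored to Frey curves, shows that the residual representation $\bar\rho_{E,p}\colon \Gal(\overline{\mathbb Q}/\mathbb Q)\to \GL_2(\mathbb F_p)$ is irreducible for all $p\geq 5$. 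Ribet's level-lowering theorem, applied to strip off the primes $\ell\mid xyz$ with $\ell\notin\{2,q\}$ (where $\bar\rho_{E,p}$ is finite flat or unramified), yields a weight-$2$ newform $f$ of trivial character and level $M\mid 2^{\alpha} q^{\beta}$ for small $\alpha,\beta$, together with a prime $\mathfrak p\mid p$ of its Hecke field, such that $\bar\rho_{f,\mathfrak p}\cong \bar\rho_{E,p}$.

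The final step is to enumerate the newforms $f$ that could give rise to this congruence and to eliminate them when $p\notin\{5,7,q\}$. For each $q$ in the list, the spaces $S_2^{\mathrm{new}}(\Gamma_0(M))$ at the relevant levels $M\in\{2q,2q^2\}$ are explicitly decomposable. Rational newforms correspond to elliptic curves $E'/\mathbb Q$ of small conductor; for these the Eichler--Shimura relations give $a_\ell(E)\equiv a_\ell(E')\pmod{p}$ for small primes $\ell\nmid 2qp$, which combined with the Weil bound $|a_\ell|\leq 2\sqrt\ell$ force $p$ to divide an explicit nonzero integer, bounding $p$. Non-rational newforms are handled by the same comparison after taking norms down to $\Z$. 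The primes $q \in \{3,5,7,11,13,17,19,23,29,53,59\}$ are precisely those for which this analysis collapses the surviving set of $p$ to $\{5,7,q\}$.

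The main obstacle is exactly this enumeration of newforms at level $2q$ (and $2q^2$): for primes $q$ outside the stated list, some newforms survive the eigenvalue-comparison test, and the argument breaks. The restriction to these eleven primes is therefore not a convenience but an intrinsic feature of the Frey--Ribet strategy as available at the time of \cite{Serre87}.
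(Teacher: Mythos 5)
First, a point of reference: the paper does not prove this statement at all --- it is quoted as background from Serre's 1987 Duke paper (historically proved there conditionally on Serre's conjecture, as the introduction notes), so there is no in-paper proof to compare against. Your plan --- Frey curve, modularity, irreducibility, level lowering to level dividing $2^{\alpha}q^{\beta}$, then elimination of newforms by eigenvalue congruences --- is indeed the correct and now-unconditional form of Serre's argument, and the overall architecture is sound. Two small inaccuracies: the Frey curve has multiplicative reduction at $q$ (since $q\nmid xy$ and $q\mid x^p+y^p$), so the only level that survives lowering is $2q$ (or $2$, with no newforms, when $p\mid n$); the level $2q^2$ you list never occurs. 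Also, with coefficients $(1,1,q^n)$ the even term may be $q^nz^p$, so the normalization $A\equiv -1\pmod 4$, $16\mid B$ is a permutation of the three terms, not just of $x$ and $y$ --- harmless, but worth saying.

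The genuine gap is in the last step, which is the entire content of the theorem. You assert that for each $q$ in the list the newforms of level $2q$ can be enumerated and that the comparison of Hecke eigenvalues ``forces $p$ to divide an explicit nonzero integer, bounding $p$.'' Bounding $p$ is strictly weaker than the statement to be proved: the theorem asserts that \emph{every} prime $p\geq 11$, $p\neq q$, is excluded, so the elimination must show that the explicit integers in question have no prime divisors $\geq 11$ other than possibly $q$ (and one must also use that $a_\ell(E)$ is even, coming from the full rational $2$-torsion, and allow the alternative $a_\ell(f)\equiv\pm(\ell+1)\pmod{\mathfrak p}$ when $\ell\mid xyz$). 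None of this computation is carried out, and the claim that the eleven listed primes are ``precisely'' those for which it works is asserted rather than verified; for $q\in\{3,5,7\}$ the argument is immediate (level $2q\in\{6,10,14\}$ has genus $0$, hence no newforms), but for the remaining eight primes the case-by-case analysis of the newforms at level $2q$ is the proof, and your write-up acknowledges rather than closes this gap. As it stands the proposal is a correct strategy outline, not a proof.
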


	\begin{theorem*}[Frey-Mazur  \cite{Frey}]
		Let $q$ be an odd prime which is neither a Mersenne prime nor a Fermat prime, let $n$ be a positive integer 
		and $m$ a 
		non-negative integer.
 		Then 
		\[
		AF_{ 1,q^n,2^m}\qquad \text{is finite.}
		\] 
	\end{theorem*}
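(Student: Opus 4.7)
The plan is to run the classical modular (Frey--Hellegouarch--Ribet--Wiles) method and to reduce the existence of a non-trivial point of $F_p(\mathbb{Q})$, for $p$ large, to a two-term $S$-unit identity in $\mathbb{Z}[1/(2q)]^{\times}$ that the Mersenne/Fermat hypothesis on $q$ forbids.

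To a primitive solution $(x,y,z)\in F_p(\mathbb{Q})$ with $xyz\neq 0$, I attach the Frey elliptic curve
\[
E_{x,y,z}:\; Y^2 = X(X - x^p)(X + q^n y^p),
\]
after possibly permuting coordinates and twisting so as to minimize the conductor exponent at $2$ (this being the point where the varying parameter $m$ must be absorbed). A Tate-algorithm computation produces an integral model, semistable away from $2$, with conductor of the shape
\[
N(E) = 2^{\alpha}\, q \, \prod_{\substack{\ell\mid xyz\\ \ell\nmid 2q}} \ell,
\]
where $\alpha$ is bounded by an absolute constant.

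Since $E_{x,y,z}$ has full rational $2$-torsion, Mazur's isogeny theorem gives irreducibility of $\bar\rho_{E,p}$ for $p\geq 5$; combined with the modularity of $E$ and Ribet's level-lowering theorem, there is a weight-$2$ newform $f$ of level $N_0 \mid 2^{\alpha} q$ such that $\bar\rho_{E,p}\simeq \bar\rho_{f,p}$. The set of such $f$ is finite and independent of $p$. Those $f$ with non-rational Hecke eigenvalues are ruled out for all but finitely many $p$ by a Frobenius-trace comparison at a small auxiliary prime. The remaining $f$ correspond to rational elliptic curves $E'$ of conductor dividing $2^{\alpha}q$; for each such $E'$, the isomorphism $\bar\rho_{E,p}\simeq \bar\rho_{E',p}$ combined with multiplicative reduction of $E$ at $q$ forces $a_q(E')\equiv \pm(q+1)\pmod{p}$, hence $a_q(E') = \pm(q+1)$ for $p$ large. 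Reading off the Weierstrass data of $E$ and $E'$ then forces an identity
\[
\pm 2^{s} \pm q^{t} = 1,\qquad s,t\geq 1,
\]
which the Mersenne/Fermat hypothesis on $q$ rules out, leaving only finitely many $p$ in play and thus finitely many points in $AF_{1,q^n,2^m}$.

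The main obstacle is the $2$-adic bookkeeping. As $m$ ranges over $\mathbb{Z}_{\geq 0}$, the conductor exponent $\alpha$ and the inertial Weil--Deligne type of $E_{x,y,z}$ at $2$ take several different values, each producing its own finite catalogue of companion curves $E'$ and its own flavour of $S$-unit equation. One must check, uniformly across all these $2$-adic configurations, that every resulting $S$-unit relation reduces to the two-term shape above, so that a single Mersenne/Fermat hypothesis on $q$ closes every case; this is where the \emph{remarkable} $S$-unit equations mentioned in the abstract enter, and where I expect the bulk of the technical work to lie.
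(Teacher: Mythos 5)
You are attempting a statement that this paper does not actually prove: it is quoted from Frey's article as background, so the only fair comparison is with the Frey--Kraus--Mazur machinery that the paper itself develops (its conductor lemma, Kraus's Th\'eor\`eme 4, Lemma \ref{recipFrey}, and the $\{2,q\}$-unit analysis), which is indeed the standard route and the one your outline follows: Frey curve, level lowering to level $2^{\alpha}q$, elimination of non-rational eigenforms, comparison with a rational companion curve, and a final unit equation killed by the Mersenne/Fermat hypothesis. The skeleton is right, but two of your steps do not work as written.

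First, the claim ``$a_q(E')\equiv\pm(q+1)\pmod p$, hence $a_q(E')=\pm(q+1)$ for $p$ large'' is impossible: by Hasse $|a_q(E')|\le 2\sqrt q<q+1$. The correct use of that congruence is the opposite one -- it shows the companion curve cannot have good reduction at $q$, so $q$ divides its conductor; it never becomes an equality, and nothing about the unit equation follows from it. Second, and more seriously, you give no argument for the decisive step that ``reading off the Weierstrass data'' of a curve $E'$ of conductor dividing $2^{\alpha}q$ forces $\pm 2^{s}\pm q^{t}=1$. For an arbitrary such curve it does not: elliptic curves of conductor $2^{a}q$ exist for many $q$ that are neither Mersenne nor Fermat (conductor $26$, say), and they yield no unit relation because they need not have rational $2$-torsion. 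The missing ingredient is exactly the Frey--Mazur/Kraus transfer step (Th\'eor\`eme 4 of Kraus, invoked in this paper) guaranteeing that for $p$ beyond an explicit bound the companion may be chosen with \emph{full rational $2$-torsion}, combined with Lemma \ref{recipFrey}, which turns any such curve of conductor $2^{r}N$ into a model $Y^{2}=X(X-A)(X+B)$ with $A,B$ coprime and $AB(A+B)$ supported on $\{2,q\}$; pairwise coprimality then makes one of $A$, $B$, $A+B$ equal to $\pm1$, giving $q^{t}=2^{s}\pm 1$, and Mih\u{a}ilescu's theorem reduces this to $q$ being Mersenne or Fermat. Until you supply or cite that $2$-torsion transfer, your ``two-term $S$-unit identity'' is an assertion, not a deduction; and for the finitely many exponents that survive you still need Faltings to conclude finiteness of each $F_p(\mathbb Q)$, which you use only implicitly.
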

	Kenneth Ribet, for the case $2\leq m <p$, and Henri Darmon, Loïc Merel, for the 
	case $m=1$ studied equation $X^p + Y^p + 2^m Z^p=0$. In particular they proved that
	\begin{theorem*}[Ribet \cite{Ribet}, Darmon-Merel \cite{DarmonMerel}]
	     	\[
	     	AF_{1,1,2^m} = \{[1:-1:0]\} \cup \{[2^r: 2^r:-1] | \text{ $m = rp+1$ for
		$p\geq 5$}\}.
		\]
	\end{theorem*}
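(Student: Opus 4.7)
The plan is to follow the Frey--Serre--Ribet--Wiles modular method. Let $P = [x:y:z] \in F_p(\mathbb Q)$ be a non-trivial point for some prime $p \geq 5$, i.e.\ $xyz \neq 0$; after clearing denominators and removing common factors we may assume $x,y,z$ are pairwise coprime integers, and after permuting coordinates and adjusting signs we arrange that $y$ is even and $x \equiv -1 \pmod 4$. Associate to this solution the Frey elliptic curve
\[
    E_{x,y,z} \colon Y^2 = X(X - x^p)(X + y^p),
\]
whose conductor, by a Tate-algorithm computation, has the form $N(E) = 2^s \prod_{q \mid xy,\ q \text{ odd}} q$ for an explicit $s \in \{0,\dots,5\}$ depending on $v_2(y)$ and $m$.

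By the modularity theorem (Wiles, Taylor--Wiles, Breuil--Conrad--Diamond--Taylor) the curve $E_{x,y,z}$ corresponds to a weight-two newform of level $N(E)$. Applying Ribet's level-lowering theorem to the residual representation $\bar\rho_{E,p} \colon \Gal(\bar{\mathbb Q}/\mathbb Q) \to \GL_2(\mathbb F_p)$ and using that $p \mid v_q(x^p y^p)$ for every odd prime $q \mid xy$, one strips all odd primes from the level. The resulting Serre conductor divides $32$, so $\bar\rho_{E,p} \simeq \bar\rho_{f,p}$ for a weight-$2$ newform $f$ of level in $\{2,32\}$. Since there are no newforms of level $2$, the level must be $32$, where the unique newform is the CM form attached to the elliptic curve $X_0(32)$ (with CM by $\mathbb Z[i]$). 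Comparing Frobenius traces $a_\ell(E_{x,y,z}) \equiv a_\ell(X_0(32)) \pmod p$ at small primes $\ell$ of good reduction and exploiting the CM relations at primes inert in $\mathbb Q(i)$ forces $E_{x,y,z}$ to be a quadratic twist of $X_0(32)$ up to $2$-isogeny. This in turn yields $|x| = |y| = 2^r$ for some $r \geq 0$; substituting into the Fermat equation with $z = \pm 1$ gives $2 \cdot 2^{rp} = 2^m$, hence $m = rp + 1$, which recovers exactly the family $[2^r:2^r:-1]$ of the statement.

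The main obstacle is the boundary case $m = 1$, handled by Darmon and Merel. Here the Frey curve has the mildest possible $2$-adic ramification and level-lowering alone does not place enough constraints on $\bar\rho_{E,p}$ to exclude non-trivial solutions. The extra ingredient is a finer analysis of the restriction of $\bar\rho_{E,p}$ to the decomposition group at $2$, distinguishing peu ramifi\'ee from tr\`es ramifi\'ee extensions, combined with Merel's non-vanishing theorem for the special $L$-value attached to the winding quotient of $J_0(32)$. This geometric input on the modular Jacobian, rather than purely Galois-cohomological level lowering, is what rules out non-trivial solutions in the degenerate case $m = 1$ and completes the classification. Throughout, one also has to separately handle the finitely many primes $p$ for which $\bar\rho_{E,p}$ is reducible, using Mazur's theorem on rational isogenies of elliptic curves.
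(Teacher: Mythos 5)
The paper does not prove this statement: it is quoted, with citations, as a theorem of Ribet (for the range $2\le m<p$) and of Darmon--Merel (for $m=1$), so your attempt has to be judged against those original arguments rather than against anything in the text. Measured that way, there are two genuine gaps. First, your opening normalization is inconsistent with the very points you must classify: you assume $x,y,z$ pairwise coprime, but the nontrivial points $[2^r:2^r:-1]$ are primitive without being pairwise coprime, and the congruence condition $m=rq+1$ arises exactly from the reduction you skip, namely pulling the common power of $2$ out of $x,y$ (equivalently, absorbing $2^{kp}$ into the third coordinate so as to replace $m$ by its residue modulo the exponent). Your later conclusion $|x|=|y|=2^{r}$ contradicts the pairwise coprimality you imposed unless $r=0$, and without explicit bookkeeping of these reductions the asserted equality of sets never comes out of the argument.

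Second, the decisive modular step is replaced by a heuristic. After level lowering one lands on the CM newform of level $32$, as you say; but for $2\le m<p$ the content of Ribet's theorem is that this situation is \emph{impossible} -- there are no nontrivial solutions at all in that range -- and his proof goes through the dihedral shape of the CM representation (the image of $\bar\rho_{E,p}$ lies in the normalizer of a Cartan subgroup of $\GL_2(\mathbb F_p)$), which is then ruled out for the Frey representation by local arguments at $2$ and results of Mazur type in the reducible and dihedral cases. Your substitute, that comparing $a_\ell$ at finitely many small primes and ``CM relations'' forces $E$ to be a quadratic twist of $X_0(32)$ up to $2$-isogeny, does not work: the congruences are only modulo $p$, and finitely many such congruences cannot pin down $E$ up to twist and isogeny uniformly in $p$; moreover that conclusion would be the wrong one in Ribet's range, where the outcome is a contradiction, not a classification of $E$. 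Likewise for $m=1$ you correctly name the winding-quotient theorem of Darmon--Merel, but you invoke it as a black box. As written, the proposal is an accurate inventory of the tools used in the cited papers, not a proof of the displayed equality.
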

	
	For a non-zero integer $N$ let $\rad(N)$ denote the greatest square-free divisor of $N$. 
	Let $\textbf{P}$ denote the set of prime numbers. We can and will identify the image of
	$\rad:\mathbb Z\setminus \{0\}\rightarrow \mathbb N$ with the set of finite subsets of 
	$\textbf{P}$. In particular the \emph{radical} of $\pm1$ corresponds to the empty set 
	under that identification. Similarly $\rad'(N)$ will denote the greatest odd divisor
	of $\rad(N)$.
	
	Alain Kraus has given effective bounds related to the Asymptotic Fermat conjecture and 
	proved the following.
	
	\begin{theorem*}[Kraus {\cite[Corollaire 1]{Kraus}}]\label{krausth}
		Let $(a, b, c)$ be non-zero pairwise coprime integers such that $\rad(abc) = 2q$ for
		an odd prime $q$ which is neither a Mersenne prime nor a Fermat prime. 
		Then there is an explicit constant $G=G(a, b, c)$ such that 
		\[
			AF_{a, b, c}=\{\text{trivial points}\} \cup\bigcup_{5\leq p<G} F^{a, b, c}_p(\mathbb Q).
		\]
	\end{theorem*}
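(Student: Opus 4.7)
The plan is to apply the Frey-Hellegouarch modular method in the style of Wiles, Ribet and Kraus himself. Given a putative non-trivial primitive solution $(x,y,z) \in F_p(\mathbb Q)$ with $p \geq 5$, I would, after a harmless permutation of $(a,b,c)$ fixing the parity convention (say $2\mid c$ and $by^p\equiv -1\pmod{4}$), attach the Frey elliptic curve
$$E=E_{x,y,z}:\quad Y^2 = X(X-ax^p)(X+by^p),$$
whose discriminant equals $16(abc)^2(xyz)^{2p}$. A Tate's algorithm computation shows that the conductor $N(E)$ is supported at the primes dividing $2q \cdot xyz$, is multiplicative at every $\ell \nmid 2q$ that divides $xyz$, and has exponent at $2$ and $q$ depending only on $\alpha$ and on the $q$-adic valuation of $(a,b,c)$; in particular these exponents are bounded uniformly in $p$.

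Next, by the modularity theorem of Wiles and Breuil-Conrad-Diamond-Taylor, the curve $E$ is modular, so its mod $p$ Galois representation $\bar\rho_{E,p}$ arises from a newform of weight $2$. Using Mazur's classification of rational isogenies of elliptic curves over $\mathbb Q$ together with the hypothesis that $q$ is neither Mersenne nor Fermat, I would rule out the exceptional $2$- and $q$-isogeny scenarios and conclude that $\bar\rho_{E,p}$ is irreducible for $p \geq 5$. Ribet's level-lowering theorem then strips every prime $\ell \nmid 2q$ from the level, producing a newform $f$ of weight $2$, trivial character, and level $N_0 = 2^s q^t$ with explicit $s=s(\alpha)$ and $t$. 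The set $\mathcal{N}(N_0)$ of such newforms is finite and effectively computable.

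For each $f \in \mathcal{N}(N_0)$ I would compare $f$ with $E$ at a finite set of auxiliary primes $\ell$ of good reduction: the congruence $a_\ell(E) \equiv a_\ell(f) \pmod{\mathfrak p}$ combined with the Hasse bound $|a_\ell(E)|,|a_\ell(f)| \leq 2\sqrt{\ell}$ forces $p$ to divide an explicit non-zero integer $B_{f,\ell}$, unless $f$ is \emph{pathological}, i.e.\ $a_\ell(f) = \pm(\ell+1)$ for every tested $\ell$. Inspection at the prime $q$ itself, where the Mersenne/Fermat hypothesis prevents $a_q(f)$ from being congruent to $\pm(q\pm 1)$ modulo $p$ for large $p$, eliminates the pathological forms. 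Setting $F(q,\alpha)$ equal to the maximum over the pairs $(f,\ell)$ of the primes dividing $B_{f,\ell}$ yields the required constant.

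The main obstacle will be making this inspection fully effective: one must enumerate $S_2^{\mathrm{new}}(\Gamma_0(N_0))$ for each admissible level, separate the CM from the non-CM newforms, and rule out, for every such $f$, the possibility that $\bar\rho_{f,p}\simeq \bar\rho_{E,p}$ for an infinite sequence of primes $p$. The exclusion of Mersenne and Fermat primes is precisely the arithmetic input needed to prevent a universal congruence at $q$, and producing the explicit resultants $B_{f,\ell}$ is the concrete, if laborious, final step that determines the size of $F(q,\alpha)$.
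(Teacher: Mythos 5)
This statement is quoted from Kraus's paper \cite{Kraus}; the present paper gives no proof of it, so your sketch can only be measured against Kraus's actual argument, whose structure is mirrored in the paper's ``Frey--Kraus--Mazur method'' section. Your skeleton (Frey curve $Y^2=X(X-ax^p)(X+by^p)$, modularity, irreducibility of $\bar\rho_{E,p}$ via Mazur, Ribet level lowering to level $2^sq^t$ with bounded exponents, then effective comparison of Fourier coefficients) is the correct and standard strategy, and the part of the argument handling newforms with non-rational Hecke eigenvalues via quantities $B_{f,\ell}$ and the Hasse bound is essentially right.

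The genuine gap is at the decisive step: the elimination of newforms $f$ with \emph{rational} coefficients, i.e.\ isogeny classes of elliptic curves $E_f$ of conductor $2^sq$. Your dichotomy is misstated --- a weight-$2$ newform can never satisfy $a_\ell(f)=\pm(\ell+1)$, since $|a_\ell(f)|\leq 2\sqrt{\ell}<\ell+1$; the real obstruction is that for a rational $f$ the congruence $\bar\rho_{E,p}\simeq\bar\rho_{E_f,p}$ may hold on the nose, so no finite list of coefficient comparisons $B_{f,\ell}$ can bound $p$, and your proposed ``inspection at the prime $q$'' (claiming the Mersenne/Fermat hypothesis controls $a_q(f)$ modulo $p$) is not how the hypothesis enters and does not produce a contradiction. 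Kraus's actual mechanism, reflected in the paper's Lemma \ref{recipFrey} and Section \ref{sunitsection}, is structural: since the Frey curve has full rational $2$-torsion, for $p$ larger than an explicit $H(N)$ the congruent object must itself be an elliptic curve of conductor $2^sq$ with full rational $2$-torsion; writing such a curve as $Y^2=X(X-A)(X+B)$ with $A+B=C$ and $rad(ABC)\mid 2q$ turns its existence into a proper point of the $S$-unit equation $2^rX+Y+Z$ with $S\subseteq\{2,q\}$, i.e.\ essentially a relation $2^u=q^v\pm 1$, which forces $q$ to be Mersenne or Fermat. Excluding those primes therefore kills all rational newforms at the lowered level, and only then does one obtain the explicit constant $F(q,\alpha)$. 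Without this $2$-torsion/$S$-unit step (or some substitute, e.g.\ the symplectic method), your construction of $F(q,\alpha)$ as a maximum of prime divisors of the $B_{f,\ell}$ does not go through.
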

	\begin{remark*}
		Case $(a, b, c)= (1,1,2^\alpha{q^\beta})$ is not explicitly stated in Kraus' paper. 
		Nevertheless, the same method as for the case  $(1,2^\alpha,q^\beta)$ applies.
	\end{remark*}

	Related to this conjecture Nuno Freitas, Emmanuel Halberstadt and
	Alain Kraus, have recently developed the so-called \emph{symplectic method} to solve 
	Fermat equations for a positive density of exponents $p$, see \cite{FreitasKraus} or
	 \cite{HalKraus}. 
	Our approach follows similar strategies as in \cite{Kraus} and relies strongly on modularity;
	see \cite{CohenII} chapter 15 for an exposition of the modular method written by Samir 
	Siksek and Theorem 15.5.3 therein for an improvement of Serre's Theorem, \ref{serrethaf}.

	In this paper we exhibit non-trivial local obstructions\footnote{We use the terminology
	\emph{non-trivial local obstructions} to distinguish from the ones introduced in Proposition 
	\ref{trivialobs}.} to some $S$-unit equations and we deduce 
	results as the following. Let $(a, b, c)$ be a primitive tern, i.e. $gcd(a, b, c)=1$, of non-zero integers.

	\begin{theorem}
		 Assume that $\rad(abc)$ is a product of primes all in $1+12\mathbb Z$ then
			$AF_{a,b,2^rc}$
		is finite for every $r\geq 0$, $r\neq 1$.
	\end{theorem}
	
	\begin{theorem}
		Assume that $\rad(abc)$ is a product of primes all in $1+3\mathbb Z$ then 
		$AF_{a,b,16c}$ is finite.
	\end{theorem}
	
	We also consider some particular cases with $\rad(abc)=q\ell$, for different odd
	primes $q,\ell$. For example
		
	\begin{theorem}
		Let $q$, $\ell\geq 5$ be primes such that $q\equiv -\ell\equiv 5\pmod{24}$.
		If $\rad(abc) = q\ell$ then, $AF_{a, b, c}$ is finite.
	\end{theorem}
		See section \ref{statements} for the complete list of cases we consider.
	\begin{remark*}
		We use Kraus method to deduce explicit bounds $G(a, b, c)$ on $p$, see 
		Section \ref{sectionbounds}.
	\end{remark*}			
%	\begin{remark}
%		Kraus paper assumes $a, b, c$ being pairwise coprime. 
%	\end{remark}	
	
\numberwithin{theorem}{section}

\section{Fermat-type curves}\label{FermatType}
	Let $a, b, c\in \mathbb Z$, $p$ prime, $abc\neq 0$. 
	By a Fermat-type curve we mean a projective plane curve of the form
	\[
		F_p^{a, b, c}: a x^p + b y^p + c z^p=0.
	\]
	Notice that the Fermat-Type curve $F_p^{a,b,c}$ is a twist of the classical one $x^p+y^p + z^p=0$
	thus, they share some geometric properties as the genus. Also, the condition
	$abc\neq 0$ is equivalent to $F_p^{a,b,c}$ being non-singular.
	
	\begin{theorem}[Faltings, \cite{Faltings}]
		Let $C/\mathbb Q$ be a projective curve of genus $\geq 2$. Then $C(\mathbb Q)$ is finite. 
	\end{theorem}
	
	By the genus-degree formula one has that $F_p^{a,b,c}/\mathbb Q$ has genus
	\[
		(p-1)(p-2)/2.
	\]
	This is a consequence of Hurwitz theorem, \cite[II, 5.9]{Silverman}.\footnote{
	Consider the degree $p$ morphism $\phi: F_p^{a, b, c} \rightarrow \mathbb P_1$, $[x:y:z]\mapsto [x:y]$. 
	It is ramified at $p$ points with constant ramification index $p$.} 
	Thus $F_p^{a, b, c}(\mathbb Q)$ is finite for $p\geq 5$. 
	The sets $F_2^{a, b, c}(\mathbb Q)$, $F_3^{a, b, c}(\mathbb Q)$ might be infinite\footnote{The
	set $F_2^{a, b, c}(\mathbb Q)$ is infinite if and only if it is not empty. 
	If $\mathcal O\in F_3^{a, b, c}(\mathbb Q)$ then $(F_3^{a, b, c},\mathcal O)$ is an elliptic curve
	over $\mathbb Q$ and $F_3^{a, b, c}(\mathbb Q)$ is a finitely generated group.}.
	
	Let  $a, b, c$ be non-zero integers and let us  consider the set $AF_{a, b, c}$ defined in
	Conjecture \ref{conjecture}. The following result is a direct consequence of 
	\cite[Proposition 1.1]{DarmonGranville}.
	
	\begin{proposition}\label{trivialobs}
		Assume that there is a prime $\ell$ such that $v_\ell(a)$, $v_\ell(b)$, $v_\ell(c)$ are pairwise different.
		Then $AF_{a, b, c}$ is finite.
	\end{proposition}
	\begin{proof}
		Let us see that 
		\[
			F_p^{a, b, c}(\mathbb Q)=\emptyset
		\]
		for every $p > k:=\max(v_\ell(a),v_\ell(b),v_\ell(c))$.
		Let $[x:y:z]\in F_p^{a,b,c}(\mathbb Q)$ and 
		\[
			(A, B, C)=(ax^p, b y^p, c z^p)\neq (0,0,0).
		\]
		Then $v_\ell(A) \leq v_\ell(B)\leq v_\ell(C)\leq \infty$ 
		up to permutation of $A, B, C$ and $A+B+C =0$. 
		Notice that 
		\[
			v_\ell(A)= v_\ell(B)< \infty
		\]
		since $v_\ell(A) = v_\ell(B + C) \geq v_\ell(B)$ and $(A,B,C)\neq (0,0,0)$. 
		Hence
		\[
			v_\ell(a)\overset{\pmod p}{\equiv} v_\ell(A) = v_\ell(B) \overset{\pmod p}{\equiv} v_\ell(b)
		\]
		Thus $v_\ell(a) = v_\ell(b)$ and 
		\[
			AF_{a, b, c} = \bigcup_{5\leq p\leq k}F_p^{a, b, c}(\mathbb Q) 
		\]
		is finite.
	\end{proof}
	
	We say that a tern $a, b, c$ has a \emph{trivial local obstruction} if there is a prime $q$
	such that $v_q(a)$, $v_q(b)$, $v_q(c)$ are pairwise different. Thus, we shall
	focus on terns with no trivial obstruction. We make the following hypothesis.
	\[
		\boldsymbol{(F)}: \text{The tern $(a, b, c)$ has no trivial local obstruction and $gcd(a, b, c)=1$}.
	\]
	Notice that $a, b, c$ satisfies $\boldsymbol{(F)}$ if $a, b, c$ are pairwise coprime.
	
	\begin{lemma}\label{lema1.3}
		Let $a, b, c$ be a tern satisfying $\boldsymbol{(F)}$ and let $p$ be a prime such that
		\[
			p > \max_{\text{$q$ prime}} \max(v_q(a),v_q(b),v_q(c)).
		\]

		Then there are pairwise coprime integers $\alpha,\beta,\gamma$ such that
		$\rad(\alpha\beta\gamma)= \rad(abc)$ and
		\[
			F_p^{a, b, c}\simeq F_p^{\alpha,\beta,\gamma}
		\]
		as algebraic curves over $\mathbb Q$. 
	\end{lemma}
	\begin{proof}
		Let $a, b, c$ be non-zero integers satisfying $\boldsymbol{(F)}$ and let 
		\begin{align*}
			T_a 	&= \gcd(b,c),\\
			T_b 	&= \gcd(a,c),\\
			T_c 	&= \gcd(a,b).		
		\end{align*}
		Then there are integers $a',b',c'$ such that $a',b',c',T_a,T_b,T_c$ are pairwise coprime
		and 
		\[
			\begin{array}{ccc}
				a = a' 	&T_b	&T_c,\\
				b = b' 	&T_a	&T_c,\\
				c = c' 	&T_a	&T_b.
			\end{array}
		\]
		The Lemma follows by an induction on the number of prime divisors of $T_aT_bT_c$.
		Assume that $1\leq e:= v_q(T_a) < p$. 
		The linear map $[x:y:z]\mapsto [qx:y:z]$ defines an isomorphism 
		$F_p^{a_1,b_1,c_1}\rightarrow F_p^{a, b, c}$, where $q^e(a_1,b_1,c_1)=(q^pa, b, c)$. 
		Hence 
		\[
			\begin{array}{ccc}
				T_{a_1}	&=	&T_a/q^e,\\
				T_{b_1}	&=	&T_b,\\
				T_{c_1}	&=	&T_c.
			\end{array}
		\]
		and $\rad(a_1b_1c_1) = \rad(abc)$. 
		
	\end{proof}
	\begin{remark}\label{technicalvaluation}
		With the notation above one has that $v_q(\alpha\beta\gamma) = v_q(abc)$
		if $q\nmid T_aT_bT_c$ and $v_q(\alpha\beta\gamma) = p-v_q(abc)/2 =p-v_q(T_aT_bT_c)$ 
		otherwise.
	\end{remark}
\section{\texorpdfstring{$S$}{S}-unit equations}\label{sunitsection}
	Let $S$ be a finite set of primes. We identify $S$ with its product in 
	$\mathbb Z$.
	Let $a, b, c$ be non-zero integers and consider the projective line 
	$L:aX + b Y + c Z=0$ attached to it. The set
	\[
		L(\mathbb Q)=\{[x:y:z]\in \mathbb P^2(\mathbb Q): L(x,y,z)=0\}
	\]
	of $\mathbb Q$-rational points of $L$ is infinite. 
	\begin{definition}
	Let $P\in L(\mathbb Q)$ and let $(x,y,z)\in\mathbb Z^3$ be a primitive representative of 
	$P$, that is $gcd(x,y,z)=1$. We say that $P$ is an \emph{$S$-point} of $L$ if $xyz\neq 0$ and 
	$\rad(xyz)\mid S$. 
	\end{definition}
	\begin{theorem}[Siegel-Mahler]
		Let $a, b, c$ be non-zero integers and let $S$ be a finite set of prime 
		numbers.
		The set of $S$-points in the line
		\[
			L: aX + b Y + c Z=0
		\]
		is finite.
	\end{theorem}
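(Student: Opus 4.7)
The plan is to reduce the statement to the classical Mahler $S$-unit theorem over $\mathbb Q$: for any finite set of primes $T$, the equation $u+v=1$ admits only finitely many solutions with $u,v\in\mathbb Z[T^{-1}]^{\times}$.

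First I would enlarge $S$ to the finite set $T := S \cup \{\ell\in\textbf{P} : \ell\mid abc\}$. Given a proper point $P=[x:y:z]$ with primitive integer representative $(x,y,z)$, I would pass to the affine coordinates
$$
u = -\frac{ax}{cz}, \qquad v = -\frac{by}{cz},
$$
so that the relation $aX+bY+cZ=0$ becomes $u+v=1$. The hypothesis $rad(xyz)=S$ ensures each of $x,y,z$ is a product of primes in $S\subseteq T$, while $a,b,c$ are supported in $T$ by construction; hence both $u$ and $v$ are $T$-units in $\mathbb Q^\times$. Mahler's theorem then produces only finitely many such pairs $(u,v)$.

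To finish, I would show that $P\mapsto(u,v)$ is injective on the set of proper points. From $(u,v)$ one recovers $[-u:-v:1] = [ax:by:cz]$ in $\mathbb P^2(\mathbb Q)$, hence $[x:y:z]$ (the coefficients $a,b,c$ being fixed), and the primitive integer representative is then unique up to sign, absorbed by projectivity. Consequently the image of the set of proper points under $P\mapsto(u,v)$ is finite, and so is the set itself.

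The main point requiring care is the $T$-unit claim for $u$ and $v$: one has to rule out that a prime $\ell\notin T$ appears in the denominator after reduction. This is precisely where the properness hypothesis enters, since the pairwise coprimality of $ax,by,cz$ prevents unwanted cancellation that could create stray prime divisors, and primitivity of $(x,y,z)$ ensures that the radical condition $rad(xyz)=S$ is tight. Once this bookkeeping is done the proof is immediate from Mahler.
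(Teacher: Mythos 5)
Your argument is correct, but it follows a different route from the paper. You normalize away the coefficients by enlarging $S$ to $T=S\cup\{\ell:\ell\mid abc\}$, pass to $u=-ax/(cz)$, $v=-by/(cz)$ with $u+v=1$, and then invoke the classical two-variable Mahler $S$-unit theorem as a black box, finishing with the (correct) observation that $P\mapsto(u,v)$ is injective because $[-u:-v:1]=[ax:by:cz]$ recovers $[x:y:z]$. The paper instead makes the analogous dehomogenization (points of $aX+bY+c=0$ with coordinates in $\Gamma=\mathbb Z[1/N]^\times$) but then \emph{proves} the finiteness: since $\Gamma/\Gamma^5$ is finite, infinitely many solutions would lie in a single coset of fifth powers, producing infinitely many rational points on a twisted quintic $ax_1X^5+by_1Y^5+c=0$ of genus $6$, contradicting Faltings' theorem. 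So your proof is shorter but outsources the entire finiteness content to the cited classical theorem (itself resting on Diophantine approximation), whereas the paper's argument is self-contained modulo Faltings, which fits the paper's framework since Faltings is already quoted in the introduction. One small remark: the point you flag as delicate is actually automatic --- $ax$ and $cz$ are nonzero integers supported on $T$, so their quotient is a $T$-unit regardless of any cancellation; the pairwise coprimality of $ax,by,cz$ and the tightness of $rad(xyz)=S$ are not needed for that step (properness is only used to guarantee $xyz\neq 0$, so that the division by $cz$ makes sense).
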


	\begin{proof}[Proof (Lang {\cite[p. 28]{Lang}} )]
		The $S$-points of $L$ correspond to the points of the affine curve 
		$C: aX + b Y + c=0$ with values in 
		\[
		\Gamma=\mathbb Z[1/S]^\times < \mathbb Q^\times.
		\] 
		The set $S$ together with $-1$ generate the abelian group 
		$\Gamma$, hence $\Gamma/\Gamma^5$ is finite. 
		If $C$ has infinitely many points with coefficients in $\Gamma$, 
		then infinitely many points $\{(x_i,y_i)\}_{i\geq 1}$ coincide mod $\Gamma^5$.
		Thus the curve $ax_1 X^5 + b y_1 Y^5 + c=0$ has infinitely many 
		rational points and it has genus $6$ since $ax_1by_1c\neq0$. This contradicts 
		Faltings' theorem.
	\end{proof}
%	By \emph{solving the $S$-unit equation $aX+ bY + c Z$} we mean finding all $S$-unit points in the line.
	
	Let us focus on the projective line 
	\[
		L_0:X+Y+Z=0.
	\]
	Frey-Kraus-Mazur (FKM) method on the Asymptotic Fermat Conjecture with coefficients $(a, b, c)$ considers the set
	of primes $S=\rad(2abc)$ and seeks for $S$-points
	of $L_0$. The $2$-adic valuation of the $S$-points will play an important role. 
	\begin{definition}
	Let $S$ be a set of primes and let $P\in L_0(\mathbb Q)$ be a $S$-point
	with primitive representative $(x,y,z)\in \mathbb Z^3$. We say that $P$ is a 
	\emph{proper $S$-point} if $\rad(P)=S$.
	The \emph{height} $h_2(P)$ is defined by $h_2(P)=v_2(xyz)$.
	\end{definition}
	One has that 
		\[
			1\leq  h_2(P)<  \infty
		\]
	for every $P\in L_0(\mathbb Q)$. 
	In particular $L_0$ has no $S$-points if $2\notin S$.
	\begin{example}
	For $S=2$ the $S$-point of $L_0$ is $[2:-1:-1]$ up to permutation of coordinates.
	\end{example}
	\begin{proposition}\label{2.5}
		Let $q$ be an odd prime and let $S=2q$. Then the set of proper $S$-points of $X+Y+Z=0$
		is
		\begin{itemize}
		\item 
		$\{[3:-2:-1],[3:-4:1], [9:-8:-1]\}$, if $q=3$,
		\item
		$\{[q:-2^n:-1]\}$ with height a power of $2$, if $q$ is a Fermat prime $\geq 5$,
		\item 
		$\{[q:-2^n:+1]\}$ with prime height, if $q$ is a Mersenne prime $\geq 7$,
			\item
		$\emptyset$, otherwise,
		\end{itemize}
		up to permutation of coordinates.
	\end{proposition}
	\begin{proof}
		Let $(x,y,z)$ be a primitive representative of a proper $S$-point.
		Then $x,y,z$ are non-zero pairwise coprime integers 
		We may assume without loss of generality that $x=q^m, y=-2^n, z=\pm 1$, $m,n\geq 1$.
		By Theorem \ref{partialcatalan} one has that either $q=n=3$ and $m=2$
		or $m=1$. We deduce that case $q=3$ has $3$ points. Case $m=1$, $q\geq 5$ implies that either 
		$q=2^n+1$ is a Fermat prime and $n$ is a power of $2$ or $q=2^n-1$ is a Mersenne prime
		and $n$ is prime. Notice that $3$ is the only prime being Fermat and Mersenne.
	\end{proof}
	
%	\subsection{Proper $S$-unit points and elliptic curves}		
%		\begin{lemma}
%		Let $N$ be an odd square-free integer and let $s\in \{0,1,3,5\}$. 
%		Let 
%		$$
%			H=\begin{cases}
%				\{4\},							&\text{if $s=0$,}\\
%				\mathbb N\setminus\{0,1,2,3,4\},	&\text{if $s=1$,}\\
%				\{2,3\},						&\text{if $s=3$,}\\
%				\{1\},							&\text{if $s=5$.}
%			\end{cases}
%		$$
%		The following are equivalent
%		\begin{enumerate}
%			\item There is an elliptic curve $E/\mathbb Q$ with full rational 
%			two-torsion and conductor $2^sN$.
%			\item There is a proper $2N$-unit point of height in $H$. 
%		\end{enumerate}
%	\end{lemma}
%	
%
%	
%	

	The FKM method to Conjecture \ref{conjecture} relies on finding pairs $(S,H)$ such that
	\begin{itemize}
		\item $S$ is a finite set of primes containing $2$,
		\item $H$ is a set of non-negative integers to be defined and
		\item there is no proper $S$-point of height $h_2\in H$ in $X+Y+Z=0$. 
	\end{itemize}
	In the following subsections we exhibit infinite families of such pairs.
	%, see Theorem \ref{mod24} and section \ref{largeS}.
	
\subsection{\texorpdfstring{$S=2q\ell$}{S=2ql}}
	 
Let $q,\ell$ be odd primes.
In this subsection we deal with equations of the form
\[
	\begin{array}{ccccc}
		2^r q^s 	&= 	&\ell^t 		&\pm 	&1\\
%		2^r \ell^t 	&= 	&q^s 		&\pm 	&1\\
		2^r 		&= 	&q^s \ell^t 	&\pm 	&1\\
		2^r 		&= 	&q^s 		&\pm 	&\ell^t \\
%		2^r 		&= 	&\ell^t 		&-		&q^s.
	\end{array}
\]
	For a non-zero integer $k$ let $\sigma(k)$ denote the number of divisors of 
	$n$ and let $\omega(k)$ denote the number of prime divisors of $k$.
	Let $\Phi_k$ denote the $k$th cyclotomic polynomial. See Appendix \ref{appA}
	for further details on cyclotomic polynomials. 

\begin{proposition}\label{completesol}
	Let $\ell,q$ be odd primes and assume that
	\[
		2^r q^s = \ell^{2t}-1
	\]
	for some positive integers $r,s,t$. The solutions are given by the following equalities
	\begin{align*}
		2^4 \cdot 5 &= 3^4 - 1\\
		2^3 \cdot3 &= 5^2 -1,\\
		2^4 \cdot 3 &= 7^2 -1,\\
		2^5 \cdot 3^2 &= 17^2-1.
	\end{align*}
\end{proposition}
\begin{proof}
	Let $r,s,t,q,\ell$ be a solution. 
	Notice that $3\mid \ell q$ since either $\ell\mid 3$ or $3\mid \ell^{2t}-1$. 

	If $\ell=3$ then $\sigma(2t) \leq 3$ by Corollary \ref{corminus}.
	Hence $t\in \{1,2\}$ with solutions $3^2-1=2^3$, $3^4-1 = 2^4\cdot 5$.
	The case $3^2-1=2^3$ is not allowed.
	
	If $q=3$ then $t=1$ by Corollary \ref{corminus}.
	The integers $\ell+1$, $\ell-1$ are consecutive even numbers and 
	$\gcd(\ell+1,\ell-1) = 2$. Case $\ell\pm 1 = 2^{r-1}\cdot 3^s$, 
	$\ell\mp 1=2$ is not possible since $\ell\geq 5$. So assume 
	\begin{align*}
		\ell+\varepsilon &=2\cdot 3^s\\
		\ell -\varepsilon &=2^{r-1}
	\end{align*}
	for some unit $\varepsilon$. Then $3^s - 2^{r-2} = \varepsilon$
	with solutions $3 -2 = 1$, $3-2^2=-1$, $3^2 - 2^3 = 1$ by Proposition  \ref{2.5}.
	Hence $\ell\in \{5, 7, 17\}$. 
\end{proof}

\begin{proposition}\label{1505}
	Let $\ell,q$ be odd primes and assume that
	\[
		2^r q^s = \ell^{t}-1
	\]
	for some positive integers $r, s, t$ such that $t$ is odd and $\geq 3$. Then 
	$t$ is prime, $\Phi_1(\ell)=\ell -1=2^r$ and $\Phi_t(\ell)=q^s$. 
	Hence $\ell$ is a Fermat prime. 
\end{proposition}
\begin{proof}
	The odd integer $t$ is prime since $ 2\leq \sigma(t)\leq \omega(\ell^t-1) =2$ by Corollary \ref{corminus}. 
	Thus 
	\[
		2^r q^s =(\ell-1)\Phi_t(\ell)
	\]
	by the polynomial factorization in \eqref{factors1}.
	Notice that $\Phi_t(\ell)$ is odd and has a prime divisor coprime to $2t$ by Theorem \ref{1805}, then
	$q\mid \Phi_t(\ell)$ and $q\neq t$. Notice that the greatest common divisor
	of $\ell-1$ and $\Phi_t(\ell)$ divides $t$.
	Hence $\ell-1$ and $\Phi_t(\ell)$ are coprime.
\end{proof}

		\begin{proposition}\label{technicallemma3}
			Let $\ell,q$ be odd primes and assume that
			\[
				2^r q^s = \ell^{t}+1
			\]
			for some positive integers $r,s,t$ such that $t$ is odd and $\geq 3$. 			
			Then $t$ is prime, $\ell+1 = 2^r$ and $\Phi_{2t}(\ell) = q^s$.
			Hence $\ell$ is a Mersenne prime.
		\end{proposition}
		\begin{proof}
			The integer $t$ is prime since $2\leq \sigma(t) \leq \omega(\ell^t+1)=2$ by Corollary \ref{corplus}. Thus
			\[
				2^r q^s = (\ell+1)\Phi_{2t}(\ell)
			\]
			by the factorization of \eqref{factors} in Appendix \ref{appA}.
			One proves as in Proposition \ref{1505} that $\Phi_{2t}(\ell)$ and $\ell+1$ are coprime. 
			Notice that $\Phi_{2t}(\ell)$ is odd.
		\end{proof}

		\begin{proposition}\label{technicalproposition1}
			Let $\ell,q$ be odd primes and assume that
			\[
				2^r q^s = \ell^{2t}+1
			\]
			for some positive integers $r,s,t$. 			
			Then $r=1$ and $2t=2^m$ for some $m\geq 1$ and $2^{m+1}\mid q-1$. 
		\end{proposition}
		\begin{proof}
		Let $t_2$ be the largest odd divisor of $2t$. Then 
		$\ell^{2t}+1$ has $\geq \sigma(t_2)$ odd prime divisors by Corollary \ref{corplus}. 
		Thus $t_2=1$. 
%		 then Corollary \ref{corplus} 
%		says that $t_2$ is either $1$ or an odd prime.
%		That is, either $X^{2t}+1=\Phi_{2^{m+1}}$ or 
%		$X^{2t}+1 = \Phi_{2^{m+1}} \Phi_{p2^{m+1}}$ for some odd prime $p$. 
%		The integer $\Phi_{2^{m+1}} \Phi_{p2^{m+1}}(\ell)$ has two odd prime divisors
%		by Theorem \ref{primedivisors} thus $2t= 2^m$.
		In particular $\ell$ has order $2^{m+1}$ in $\mathbb F_q^\times$.
		Notice that $\ell^{2^m}+1\equiv 2 \pmod4$ thus $r=1$. 
		\end{proof}

			Let $n\geq 2$ be an integer. 
			The equation $2X^n - 1 = Z^2$ was studied by Carl Störmer 
			in \cite[Section 3]{Stormer}.
			He proved that either $n$ is a power of two or $X=Z=1$ is
			the only solution in $\mathbb Z$. See also \cite[A11.1]{Ribenboim}. 
			
		\begin{proposition}\label{technicalthm2}
			Assume that there are odd primes $\ell, q$ such that 
			\[
				2^r q^s = \ell^{2t} +1
			\]
			for some integers $r, s,t\geq 1$. 
			Then $r=1$ and either 
			\begin{itemize}
				\item $s=1$ and $q = \dfrac{\ell^{2t} +1}{2}$, or
				\item $(q,\ell) = (13, 239)$, or
				\item $s=2, t=1$.
			\end{itemize}
		\end{proposition}
		\begin{proof}
			We have already seen in Proposition \ref{technicalproposition1} that $r$ is necessarily $1$. 
			Case $s=1$ has many solutions.
			Assume $s\geq 2$. Then $s$, $2t$ are powers of two due to Störmer's result
			and Proposition \ref{technicalproposition1}. The curve $2x^4 = y^2 + 1$ has two
			positive integer solutions $(1,1)$ and $(13,239)$ due to \cite{Ljunggren} or \cite{SteinerTzanakis}.
			For the study of $C: 2x^2 = y^4 +1$, we consider the rational map $\varphi: C\dashrightarrow E$
			$$
				(x,y)\mapsto \left(\dfrac{4x}{x-y} - 2, \dfrac{4-4x^2}{(x-y)^2}\right)
			$$
			where $E$ denotes the elliptic curve given by
			$y^2 = x^3 + 4 x$ with Cremona Label 32a1. 
			Notice that $\varphi$ is well defined in the Zariski open $U= C\setminus\{x=y\}$ and 
			that $\varphi$ maps rational points of $U$ to rational points of $E$.
			The Mordell-Weil group of $E$ consists in $4$ points, they are $(0,0)$, $(2,4)$, $(2,-4)$ and the 
			point at infinity, see \cite{LMFDB}. The computation of $\varphi^{-1}E(\mathbb Q)$ and 
			$C\cap \{ x = y\}$ provides the equality $C(\mathbb Q) = \{\pm(1,-1), \pm(1,1)\}$.

			Thus equation $2 q^{2^n} = \ell^{2^m} + 1$ has only solution $(q,\ell)=(13,239)$ for $mn\geq 2$. 
		\end{proof}

	\begin{remark}
		Let $q$, $\ell$ be odd primes and assume that $\ell$ is neither a Fermat prime nor a Mersenne
		prime. One deduces from previous statements an algorithm to determine the solutions 
		to the equation $2^r q^s = \ell^t+\varepsilon$. Indeed, the cases $s=1$ or $t=1$ 
		are easy to deal with. 
		Let us assume that $s, t\geq 2$. The case $\varepsilon =-1$
		is completely treated in Propositions \ref{completesol} and \ref{1505}.
		The case $\varepsilon=1$ and $t$ odd is solved in Proposition \ref{technicallemma3}.
		For the case $\varepsilon =1$ and $t$ even one has by Proposition \ref{technicalthm2} that either 
		$(q,\ell) = (13, 239)$ or $s=t=2$ and $r=1$.
		
		One can use elementary algebraic number theory to attack the equation $\ell^2 - 2q^2 =-1$. 
		Notice that an integer point of $x^2 - 2 y^2 =-1$ corresponds to the unit 
		$x+y\sqrt 2\in \mathbb Z[\sqrt 2]^\times$.
		Thus, all these points arise as powers of the fundamental
		unit $\eta=1+\sqrt 2$, i.e. $\mathbb Z[\sqrt 2]^\times = \{\pm1\} \cdot \eta^\mathbb Z $.
		Four such pairs $(\ell,q)$ arise as coefficients of $\eta^n$, with $3\leq n\leq 10^4$. 
	\end{remark}
	
	There are indeed solutions to equation $2^r q^s = \ell^t\pm 1$. 
		
	\begin{examples}
		\begin{itemize}
			\item $2\cdot 5^2 = 7^2 + 1$ corresponding to $\eta^3$,
			\item $2\cdot 29^2 = 41^2 + 1$ corresponding to $\eta^5$,
			\item $\eta^{29}$,
			\item $\eta^{59}$,
			\item $2\cdot 13^4 = 239^2 + 1$,
			\item $\Phi_5(3) = 11^2$ and $2 \,\Phi_5(3) = 3^5-1$,
			\item $\Phi_7(5)$ is prime and $2^2\cdot  \Phi_7(5) = 5^7 -1$,
			\item $\Phi_{34}(7)$ is prime and $2^3\cdot \Phi_{34}(7) = 7^{17}+1$. 
		\end{itemize}
	\end{examples}
	
	We finish this subsection with a general statement about $2S$-unit equations for $|S|=2$. 
		
	\begin{lemma}\label{mod24}
		Let $q$, $\ell\geq 5$ be primes.
		Assume one of the following:
		\begin{enumerate}
			\item $(q,\ell) \equiv (-5, 5)$ or $(11,-11) \pmod {24}$.
			\item $q\equiv 11 \pmod{24}$, $\ell\equiv 5\pmod{24}$ and
			$\genfrac (){}{}{q}{\ell} = -1$.
			\item $q\equiv \pm 3 \pmod 8$, $\ell\equiv -1 \pmod {24}$, $\ell\not 
			\equiv -1
			\pmod {q}$.
		\end{enumerate}
%		Let $a, b, c$ pairwise coprime integers such that $\rad(abc) = q_1q_2$.
		Then the $2\ell q$-unit equation 
		\[
			 X + Y + Z = 0
		\]
		has no proper points of height $\geq 3$.
	\end{lemma}
	\begin{proof}
		This is a mod $24$ exercise. See Appendix \ref{mod24ex}.
	\end{proof}	
	\subsection{Large \texorpdfstring{$|S|$}{|S|}}\label{largeS}
	\begin{lemma}[$h_2=4$]\label{r4}
		Let $S$ be a finite set of primes in $1+ 3\mathbb Z$.
		Then $L_0$ has no $2S$-points of height $4$.
	\end{lemma}
	
	\begin{proof}
		Let $(A, B, C)$ be a (primitive representative of a) $2S$-point of height $4$
		and let $\varepsilon_A,\varepsilon_B,\varepsilon_C$ the sign
		of $A, B, C$, respectively. Then 
		\[
			0=A+B+C \equiv \varepsilon_A +\varepsilon_B+\varepsilon_C\pmod 3.
		\]
		Hence $(\varepsilon_A, \varepsilon_B,\varepsilon_C) = \pm (1,1,1)$ and $A+B +C$ 
		is either strictly positive or strictly negative.
	\end{proof}	
	
	Notice that the same proof applies to every even height case. 
	In particular, $L_0$ has no $2S$-point of even height with the notation of Lemma \ref{r4}.
	
	\begin{lemma}[$h_2=4$]\label{r41}
		Let $n$ be a positive integer not dividing $14$, $16$ nor $18$ and let 
		$S$ be a finite set of primes in $\pm 1+ n\mathbb Z$. Then $L_0$ has 
		no $2S$-points of height $4$.
	\end{lemma}
	\begin{proof}
		Let $(A, B, C)$ be a $2S$-point of height 4. Say $A=2^4 A'$, then $A',B, C \equiv \pm 1 \pmod n$. 
		Thus 
		\[
		0=A+B+C\equiv \pm 16 \pm 1 \pm 1\pmod n.
		\]
		Hence $n\mid 14, 16$ or $18$.
	\end{proof}
	\begin{lemma}[$h_2\geq 2$]\label{r2}
		Let $p$ be an odd prime. Let $S$ be a finite set of primes 
		in $1+4p\mathbb Z$. Then $L_0$ has no $2S$-points 
		of height $\geq 2$.
	\end{lemma}
	\begin{proof}
		Let $(A, B, C)$ be a proper point. Say
		\[
		\begin{array}{ccccc}
			A	&=	&\varepsilon_A 		&A'		&2^r\\
			B	&=	&\varepsilon_B 		&B'\\
			C	&=	&\varepsilon_C 	&C'
		\end{array}
		\]
%		\begin{align*}
%			A&=\varepsilon_A A' 2^{r}\\
%			B&=\varepsilon_B B'\\
%			C&=\varepsilon_C C'
%		\end{align*}
		for $r\geq 2$ and $\varepsilon_x = \sign x$. 
		Then $A'\equiv B'\equiv C' \equiv 1\pmod{4p}$ and 
		\[
		0 = A + B + C \equiv 2^r \varepsilon_A+\varepsilon_B + \varepsilon_C \pmod{4p}.
		\]
		Thus $\varepsilon_B\equiv -\varepsilon_C\pmod 4$ and $ 2^r \varepsilon_A+\varepsilon_B + \varepsilon_C\equiv 0\pmod p$. 
		Then $\varepsilon_B =-\varepsilon_C$ and $p\mid 2^r$. 
	\end{proof}

	\section{Frey-Kraus-Mazur method}
		In this section we recall the FKM method. The standard references are Frey's \cite{Frey} and
		Kraus' \cite{Kraus} papers.
		Let $a, b, c$ be non-zero pairwise coprime integers and let 
		\begin{equation}\label{primebound}
			p>\max \left(4, \max_{\text{$q$ prime}} v_q(abc)\right)
		\end{equation}
		be a prime. Assume that $F_p^{a, b, c}(\mathbb Q)$ has a non-trivial point $P$ and let $(x,y,z)$ 
		be a primitive tern of non-zero integers such that $[x:y:z] =P$. That is, 
		$xyz\neq 0$, $gcd(x,y,z)=1$  and 
		\[
			ax^p +by^p + cz^p=0.
		\]
		Notice that $(A, B, C)=(ax^p,by^p,cz^p)$ are pairwise coprime integers.\footnote{
		Indeed, if $q\mid x,y$ then $q^p\mid c$ and $p\leq v_q(c)\leq v_q(abc)$.}
	
	\subsection{The Frey curve} 

	Following the notation above consider the elliptic curve
	\[
		E=E_{A, B, C}:Y^2 = X(X-A)(X+B)
	\]
	over $\mathbb Q$. 
	The definition of $E_{A, B, C}$ is sensible to the order of $(A, B, C)$. More precisely,
	the curve $E_{A, B, C}$ is a twist of $E_{B,A,C}$ by the quadratic twist of $\mathbb Q(i)/\mathbb Q$ while even permutations
	of $(A, B, C)$ define $\mathbb Q$-isomorphic elliptic curves. Hence $E_{A, B, C}$, $E_{B,A,C}$
	have common prime-to-$2$ conductor.
	Let us reorder $(A, B, C)$ so that $E$ has minimal conductor exponent over 
	$\mathbb Q_2$.\footnote{For example one can take $B$ even and $A\equiv-1\pmod 4$.} 
	
	\begin{proposition}\label{toprove2}
		$E$ has conductor $2^r \rad'(abcxyz)$ where 
		\[
		r = \left\{
			\begin{array}{ccl}
				1	&	&\text{if $xyz$ is even or $v_2(abc)\geq 5$,}\\
				0	&	&\text{if $xyz$ is odd and $v_2(abc) =4$,}\\
				3	&	&\text{if $xyz$ is odd and $v_2(abc) \in\{2,3\}$,}\\
				5	&	&\text{if $xyz$ is odd and $v_2(abc) =1$.}\\
			\end{array}
		\right.
		\]
	\end{proposition}
	\begin{proof}
		The elliptic curve $E$ has 
		semi-stable reduction at every odd prime since $A, B, C$ are pairwise coprime. 
		Let $\ell$ be an odd prime, then $E$ has bad reduction over $\mathbb Q_\ell$
		if and only if $\ell\mid ABC$. 
		Thus, $E$ has  prime-to-$2$ 
		conductor $\rad'(ABC)=\rad'(xyzabc)$ by Neron-Ogg-Shafarevich. The conductor exponent of $E$ over $\mathbb Q_2$
		has been computed in \cite[Lemma 2]{DiamondKramer}. If $xyz$ is even
		then $v_2(ABC)\geq p v_2(xyz) \geq p > 4$ by hypothesis, thus $r=1$.
		Notice that $v_2(abc)=0$ implies $xyz$ even.
	\end{proof}

	\begin{lemma}\label{irredlemma}
		The Galois representation
		\[
			\bar{\rho}_{E,p} :\Gal(\bar{\mathbb Q}/\mathbb Q)\longrightarrow \Aut(E[p])\simeq \GL_2(\mathbb F_p)
		\]
		is irreducible. 
	\end{lemma}
	\begin{proof}
		Recall that $p\geq 5$ by assumption \eqref{primebound}. 
		The irreducibility condition is proved in Serre's paper \cite[Proposition 6]{Serre87} for the 
		semi-stable case, i.e. $r\leq1$. Serre's proof relies on Mazur's theorem
		\cite[Theorem 2]{Mazur}. 
		
		Let us prove irreducibility for $r\in\{3,5\}$. 
		Consider the local Galois representation
		\[
			\rho_{E,p}\vert_{G_2}: \Gal(\bar{\mathbb Q}_2/\mathbb Q_2)\rightarrow \Aut(\mathcal T_p(E))
		\]
		and the residual representation
		\[
			\bar{\rho}_{E,p}\vert_{G_2}: \Gal(\bar{\mathbb Q}_2/\mathbb Q_2)\rightarrow \Aut(E[p]),
		\]	
		where $\mathcal T_p(E)$ denotes the $p$-adic Tate module of $E$ and $G_2$ denotes
		a decomposition subgroup of $G_{\mathbb Q}$ over $2$. 
		The conductor of $\rho_{E,p}\vert_{G_2}$ is larger than or equal to the conductor of
		$\bar{\rho}_{E,p}\vert_{G_2}$. Henri Carayol computed in \cite{Carayol89} the cases
		where the inequality is strict. See the discussion in page 789 and Proposition 2 therein. 
		Since $\rho_{E,p}\vert_{G_2}$ has unramified determinant
		and $r\geq 3$ one deduces that $\rho_{E,p}\vert_{G_2}, \bar{\rho}_{E,p}\vert_{G_2}$
		have common conductor $2^r$. Assume that $\bar{\rho}_{E,p}$ is reducible then
		\[
		\bar{\rho}_{E,p}\vert_{G_2}\simeq \left(
		\begin{array}{cc}
			\chi_1	&*\\
					&\chi_2
		\end{array}
		\right)
		\]
		with $\chi_1\chi_2$ being the (unramified) mod $p$ cyclotomic character. 
		Thus $\chi_1$, $\chi_2$ have common conductor. The Swan conductor is invariant under 
		semisimplification. Thus, the Swan conductor of $\bar{\rho}_{E,p}\vert_{G_2}$ coincides with the
		Swan conductor of $\chi_1\oplus \chi_2$. That is, 
		either $\chi_1$, $\chi_2$ are unramified and $\bar{\rho}_{E,p}\vert_{G_2}$ has conductor $\leq 1$
		or $\chi_1,\chi_2$ are ramified with common Swan conductor $m$. In the last case one has 
		that $\bar{\rho}_{E,p}\vert_{G_2}$ has even conductor exponent 
		$r=\dim_{\mathbb F_p} E[p] - \dim_{\mathbb F_p} E[p]^{I_2} + 2m = 2+2m$.
		
		\end{proof} 
		
		\subsection{Lowering the level}
		
		We shall lower the level of $E$ via $E[p]$. The standard reference here is Ribet's
		level lowering theorem, \cite{RibetLow}. Let us recall some notation therein. Let $\bar\rho:=E[p]$
		be the mod $p$ irreducible representation attached to $E$. Then $\bar\rho$ is modular of
		level $N=2^r\rad'(abcxyz)$ by Wiles \cite{Wiles}, see also \cite{DiamondKramer}.
		Let $\ell$ be a prime divisor of $N$ with $\ell \parallel N$, that is $\ell \mid N$ and $\ell^2 \nmid N$. The representation $\bar\rho$ is finite at $\ell$
		if by definition some geometric condition is satisfied.\footnote{More precisely, $\bar\rho$ is finite at $\ell$
		if there is a finite flat $\mathbb F_p$-vector space scheme $H$ over $\mathbb Z_\ell$ such that  
		$H(\bar{\mathbb Q}_\ell)$ is isomorphic to $\bar\rho\vert_{G_\ell}$ as $\mathbb F_p[G_\ell]$-modules.}
		For the case of modular elliptic curves that condition has a pleasant equivalence.
		\begin{lemma}
			Let $p$ be a prime, let $E'$ be an elliptic curve over $\mathbb Q$ of conductor $N'$ and let $\ell\parallel N'$
			be a prime. Then $E'[p]$ is finite at $\ell$ if and only if $p\mid v_\ell(j_{E'})$. If $p\neq \ell$ then $E'[p]$ is finite
			at $\ell$ if and only if $E'[p]$ is unramified at $\ell$. 
		\end{lemma}
		\begin{proof}
			The lemma is a consequence of Tate's uniformization for multiplicative reduction elliptic curves over $\mathbb Q_\ell$. 
			See \cite[2.12]{DDT} and \cite[8.2]{Edixhoven}.
		\end{proof}
		Let $s$ be the  conductor exponent of $E[p]$ at $2$, $s\leq r$. If $r\in\{0,3,5\}$ then $s=r$. 
		If $r=1$ then $s$ is ruled by Tate's uniformization. That is, $s=0$ if and only if $v_2(abc)=4$. 
		\begin{theorem}\label{newform}
			Following the notation above, let 
			\[
				\bar{\rho}:\Gal(\bar{\mathbb Q}/\mathbb Q)\rightarrow \GL_2(\mathbb F_p)
			\]
			be the Galois representation attached to the $p$-torsion of $E=E_{A, B, C}$.
			There is a newform $f\in S_2(2^s\rad'(abc))$ whose mod $p$ Galois representation
			is isomorphic to $\bar\rho$.
		\end{theorem}

		\begin{proof}
			Let $\ell$ be an odd prime divisor of $\rad'(abcxyz)$. Then $E[p]$ is finite at $\ell$
			if and only if $\ell \nmid abc$. Indeed,
			\[
				j_E=\dfrac{2^8 (C^2 - AB)^3}{A^2 B^2 C^2}
			\]
			and $v_\ell(j_E) =- 2 v_\ell(abc) -2pv_\ell(xyz)$.
			Thus $p\mid v_\ell(j_E)$ if and only if 
			\[
				p\mid v_\ell(abc).
			\]
			Recall that $p>v_\ell(abc)$ by assumption \eqref{primebound}. Thus $E[p]$ is finite
			at an odd prime $\ell$ if and only if $\ell\nmid abc$. 
			
			Ribet's level lowering Theorem states that $\bar\rho$ is modular of level $2^s\rad'(abc)$. 
			I.e., there is a newform in $S_2(M)$, for some $M\mid 2^s \rad'(abc)$, and 
			a prime $\mathfrak p\ni p$ such that $\bar{\rho}_{f,\mathfrak p}$ and $\bar\rho$
			are isomorphic. See \ref{levelcomputation} for a proof of the equality 
			$M=2^s\rad'(abc)$. 
%		
%			
%			Let $R$ be the largest divisor
%			of $N$ coprime to $2p$. Then, $\bar\rho$ is ramified at every prime divisor of $R$ by
%			Tate's uniformization. Since $\bar{\rho}_{f,\mathfrak p}$ is unramified at primes not dividing $pM$
%			one has that $R\mid M$. 
%			
%			If $p\mid N$ then $p\mid M$. Indeed, assume $p\mid N$, $p\nmid M$ then
%			$\bar{\rho}_{f,\mathfrak p}\vert_{G_p}$ is either irreducible or reducible and finite. 
%			Again by Tate's uniformization $\bar\rho\vert_{G_p}$ is reducible and not finite.
%			This leads to a contradiction since $\bar\rho\simeq \bar{\rho}_{f,\mathfrak p}$. 
%			
%			It remains to prove that $2^s\mid M$. If $r\in\{0,3,5\}$ then $\bar\rho$ has exponent conductor
%			$r$ over $\mathbb Q_2$. Hence $s=r$ and we are done. If $r=1$, then either $s=1$ and 
%			$\bar{\rho}_{f,\mathfrak p}$ is ramified at $2$, hence $2\mid M$ or $s=0$ 
%			
%			Let $s$ denote the conductor exponent of $\bar{\rho}\vert_{G_2}$.
%			As discussed in the proof to Lemma \ref{irredlemma} one has that $s\leq r$ with equality
%			if  $r\in\{0,3,5\}$. 
%		
%			Case $(s,r)=(0,1)$ corresponds to $v_2(ABC)\geq 5$ and $v_2(abc)=4$. 
%			Indeed, $r=1$ if and only if $v_2(ABC)\geq 5$. In that case $s=0$ if and only if $E[p]$
%			is unramified at $2$ if, and only if, $p\mid v_2(j_E)$.
%			Since 
%			$$
%				v_2(j_E)= 8 -2v_2(ABC) \equiv 8-2v_2(abc) \pmod p
%			$$
%			and $p>v_2(abc)$ by assumption \ref{primebound} then $s=0$ is equivalent to $v_2(abc)=4$. 
		\end{proof}
	The final step is to connect $f$ with $S$-unit equations.
	Kraus method allows us to impose the following conditions.  
	\begin{itemize}
		\item $[\mathbb Q_f:\mathbb Q]=1$ so that $f$ corresponds to an elliptic curve $E'/\mathbb Q$.
		\item $E'$ has full rational $2$-torsion.
	\end{itemize}
	
	\begin{theorem}\label{Krauscurves}
		There is a constant $H=H(\rad(abc),s)$ such that if $p > H$ 
		then the newform described in Theorem \ref{newform}
		corresponds to an elliptic curve over $\mathbb Q$ with
		full rational $2$-torsion (up to isogeny). 
	\end{theorem}
	\begin{proof}
		See Théorème 3 and Théorème 4 in \cite{Kraus}.
	\end{proof}
	
	Notice that $H$ depends on $[x:y:z]$ since $r$ and $s$ may vary from 
	point to point. Nevertheless, one can still give a uniform
	bound depending only on $a, b, c$ by taking 
	$\max_s(H(\rad(abc), s))$.
	
	\begin{proposition}\label{reciprocalFrey}
		Let $N$ be a square-free odd integer and let $r\in \{0,1,3,5\}$.  
		The existence of a Frey Curve of conductor $2^rN$ is equivalent
		to the existence of a proper $2N$-point of height 
		\[
			\begin{array}{cl}
				\geq 5				&\text{ if $r=1$,}\\
				4					&\text{ if $r=0$,}\\		
				\text{$2$ or $3$} 		&\text{ if $r=3$,}\\
				1					&\text{ if $r=5$.}						
			\end{array}
		\]
	\end{proposition}
	\begin{proof}
		The existence of a Frey curve attached to a $2N$-point follows as in Proposition \ref{toprove2}.
		For the other implication let 
		\[
			E:Y^2 = X(X-A)(X+B)
		\]
		be a Frey curve of conductor 
		$2^r N$, $A,B\in \mathbb Z$. There is a Frey curve 
		\[
			E' :Y^2 =X(X-a)(X+b)
		\] 
		twist of $E$ such that $a$, $b$ are coprime, 
		$a\equiv -1\mod 4$ and $b$ is even. Mainly, the tern $(a,b,-a-b)\in \mathbb Z^3$ is a primitive representative of 
		$[A:B:-A-B]$ up to permutation of coordinates. Let us see that $(a,b,c)$ is a proper $2N$ point with 
		the corresponding constrains on the height. 
		The curve $E'$ has conductor $2^{r'} \rad'(ab(a+b))$ where 
		\[
			r'= \left\{
			\begin{array}{cl}
				0	&\text{if $v_2(b) = 4$,}\\
				1	&\text{if $v_2(b) \geq 5$,}\\
				3	&\text{if $v_2(b) = 2,3$,}\\
				5	&\text{if $v_2(b) = 1$}
			\end{array}
			\right.
		\]
		as described in \cite{DiamondKramer}. Thus, it is enough to prove that $r'=r$ and
		$N=\rad'(ab(a+b))$. Let $g\in \mathbb Z$ square-free such that $E$ is a twist of $E'$
		by the quadratic character $\chi$ attached to $\mathbb Q(\sqrt g)$. Equivalently, 
		\[
			E\simeq E'':Y^2 = X(X-a g) (X+bg).
		\]
		This model of $E''$ is minimal over $\mathbb Z_p$ for every odd prime $p$. 
		Thus $E$ has additive reduction at every odd prime divisor of $g$. Since $N$ is square-free
		one deduces that $g\in \{\pm 1,\pm2\}$ and $N = \rad'(ab(a+b))$. The conductor of $E'$ over 
		$\mathbb Q_2$ needs special consideration. Assume $g\neq 1$, otherwise
		$E$  and $E'$ have common conductor. 
		The character $\chi$ has conductor $2^{|g|+1}$. We check $r=r'$ via modularity.
		Let $f$ be the weight $2$, level $2^{r'}N$, trivial character newform attached to $E'$, by Wiles.  Then 		$f\otimes \chi$ is the newform attached to $E$ and has level $2^r N$. 
		If $r'\neq 5$ or $g\neq-1$, then the level of $f\otimes \chi$ is $2^{2|g|+2}N$ by 
		\cite[Theorem 3.1]{AtkinLi}. This contradicts that $r\in\{0,1,3,5\}$.
		 If $r'=5$ and $g=-1$ then $f\otimes \chi$ has level $2^5N$. Indeed, if 
		$g= f\otimes \chi$ has level $2^s N$ with $s\leq 3$ then $g\otimes \chi = f$ has level $2^4N$,
		\emph{loc. cit}. This contradicts $r'\in\{0,1,3,5\}$.
		Hence, either $g=1$ or $g=-1$ and $r=r'=5$.
		
		One can also use Tate's algorithm \cite{Tate} to compute those conductors. 
		See also \cite[Proposition 1]{Ulmer} for the case where $\mathbb Q(\sqrt{g})$ is more deeply 
		ramified than $\rho_{E',\ell}$, i. e. $r'\leq 1$ and $g\neq 1$.
	\end{proof}

	\section{Kraus Theorem}
	The following is a slight generalization of Kraus' Theorem \cite[Théorème 1]{Kraus}. 
	That is, the assumption $a,b,c$ being pairwise coprime has been relaxed to $\boldsymbol{(F)}$.

	Let $a,b,c$ be non-zero integers satisfying condition $\boldsymbol{(F)}$ defined in Section 
	\ref{FermatType}.
	We assume without loss of generality that $a$ is odd. 
	\begin{theorem}[$2$-good]\label{theorem2}
		Assume that $b$ is odd and $v_2(c) = 4$. 
		Let $S=\rad(abc)$ and assume that there are no proper $S$-points of $L_0$ of height $4$.
		Then there is a constant $G(a, b, c)$ such that
		\[
			AF_{a, b, c}\subseteq\bigcup_{5\leq p\leq G(a, b, c)} F_p^{a, b, c} \cup \{\text{trivial points}\}.
		\]
	\end{theorem}
	
	\begin{theorem}[$2$-node]\label{theorem1}
		Assume one of the following
		\begin{enumerate}
			\item $bc$ is odd,
			\item $b$ is odd and $v_2(c) \geq 5$, or
			\item $v_2(b) = v_2(c) \geq 1$.
		\end{enumerate}
		Let $S=\rad(2abc)$ and assume that there are no proper $S$-points
		of height $\geq 5$. Then there is a constant $G(a, b, c)$ such that
		\[
			AF_{a, b, c}\subseteq \bigcup_{5\leq p \leq G(a, b, c)} F_p^{a, b, c}\cup\{\text{trivial points}\}.
		\]
	\end{theorem}
	
	\begin{theorem}\label{theorem3}
		Assume that $b$ is odd and $v_2(c) \in \{2,3\}$. 
		Assume that there are no proper $S$-points of height $2$, $3$ or $\geq 5$ for $S=\rad(abc)$. 
		Then there is a constant $G(a, b, c)$ such that
		\[
			AF_{a, b, c}\subseteq \bigcup_{5\leq p \leq G(a, b, c)} F_p^{a, b, c}\cup\{\text{trivial points}\}.
		\]
%		\begin{itemize}
%			\item either $AF_{a, b, c}\subseteq \bigcup_{5\leq p \leq G(a, b, c)} F_p^{a, b, c}\cup\{\text{trivial points}\}$,
%			\item or $AF_{a, b, c}$ has infinitely many primitive points $[x:y:2z]$
%			 and there are $S$-unit points of height $\geq 5$. 
%		\end{itemize}
	\end{theorem}
	\begin{theorem}
		Assume that $b$ odd, $v_2(c)=1$ and let $S=\rad(abc)$. Assume that there are no proper 
		$S$-points of height $1$ or $\geq 5$.
		Then there is a constant $G(a, b, c)$ such that
		\[
			AF_{a, b, c}\subseteq \bigcup_{5\leq p \leq G(a, b, c)} F_p^{a, b, c}\cup\{\text{trivial points}\}.
		\]
%		\begin{itemize}
%			\item either $AF_{a, b, c}\subseteq \bigcup_{5\leq p \leq G(a, b, c)} F_p^{a, b, c}\cup\{\text{trivial points}\}$,
%			\item or $AF_{a, b, c}$ has infinitely many primitive points $[x:y:2z]$ and 
%			there are $S$-unit points of height $\geq 5$.  
%		\end{itemize}
	\end{theorem}
	
	See \ref{sectionbounds} for a description of $G(a, b, c)$. 
	
	\begin{proof}
		Assume that there is a prime $p>G(a, b, c)$ such that $F_p^{a, b, c}(\mathbb Q)$ 
		has non-trivial points.
		Let $\alpha,\beta,\gamma$ be pairwise coprime integers such that 
		$F_p^{a, b, c} \simeq F_p^{\alpha,\beta,\gamma}$ and 
		$\rad(abc) = \rad(\alpha\beta\gamma)$ by Lemma \ref{lema1.3}
		and let $(x,y,z)$ be a primitive 
		representative of a non trivial point $P$ in $F_p^{\alpha,\beta,\gamma}(\mathbb Q)$.
		Let $(A, B, C) = (\alpha x^p , \beta y^p,\gamma z^p)$ and  reorder $(A, B, C)$
		so that $E=E_{A, B, C}$ has minimal conductor. If $b,c$ are both even then the choice of $G(a, b, c)$
		ensures that $v_2(\alpha)\geq 5$, see  Remark \ref{technicalvaluation}. Otherwise, 
		$v_2(\alpha\beta\gamma)=v_2(abc)$. The level lowering trick combined with the fact that $p$ is large,
		see Theorem \ref{Krauscurves},
		implies that there is an elliptic curve $E'$ over $\mathbb Q$ with full rational $2$-torsion such that
		$E[p] = E'[p]$. Moreover $E'$ has conductor $2^s\rad'(abc)$ where
		\[
			s=
			\begin{cases}
				0		&\text{if $b$ is odd and $v_2(c) = 4$,}\\
				1		&\text{if $b, c$ have same parity,}\\
				1		&\text{if $b$ is odd, $v_2(c)\in\{1,2,3\}$ and $xyz$ is even,}\\
				3		&\text{if  $b$ is odd, $v_2(c) \in \{2,3\}$ and $xyz$ is odd,}\\
				5		&\text{if  $b$ is odd, $v_2(c) = 1$ and $xyz$ is odd.}
			\end{cases}
		\]
		Thus $E'=E_{R,S,T}$ is a Frey curve, with $\rad(RST) = \rad(2abc)$. That is, there
		is a proper $\rad(2abc)$-point of height
		\[
			\begin{cases}
				1				&\text{if $s=5$,}\\
				\text{$2$ or $3$}	&\text{if $s=3$,}\\
				4		&\text{if $s=0$,}\\
				\geq 5	&\text{if $s=1$,}
			\end{cases}
		\]
		by Proposition \ref{reciprocalFrey}.
	\end{proof}
	
	\section{Statements}\label{statements}
	In this section we translate Lemmas \ref{mod24}, \ref{r4}, \ref{r41} and \ref{r2}  
	to new cases of Asymptotic Fermat Conjecture with coefficients. 
	Let $(a, b, c)$ be a tern of non-zero integers satisfying $\boldsymbol{(F)}$,  $a$ odd. 

	\begin{theorem}
		Let $S$ be a set of primes all in $ 1+3\mathbb Z$.
		and assume that $\rad(abc) = S$.
		Then the Fermat equation 
		\[
			ax^p + b y^p + 16cz^p=0
		\]	
		has no solutions other than $xyz=0$ for $p$ larger than $G(a,b,16c)$.
	\end{theorem}
	\begin{proof}
		There are no proper $2S$-points of height $4$ by Lemma \ref{r4}.
		The theorem follows due to Theorem \ref{theorem2}. 
	\end{proof}	
	
	\begin{theorem}
		Let $n$ be a positive integer not dividing $14,16,18$ and let $S$ be a finite
		set of primes all in $(1 + n\mathbb Z)\cup (-1+ n\mathbb Z)$.
		Assume $\rad(abc)=S$. Then the Fermat equation
		\[
			ax^p + b y^p + 16 c z^p=0
		\]
		has no solutions other than $xyz=0$ for $p$ larger than $G(a,b,16c)$.
	\end{theorem}
	\begin{proof}
		There are no proper $2S$-points of height $4$ by Lemma \ref{r41}.
		The theorem follows due to Theorem \ref{theorem2}.
	\end{proof}
	
	\begin{theorem}
		Let $q$ be an odd prime and let $S$ be a finite set of primes all in  $1+4q\mathbb Z$.
		Assume that either $\rad(abc) = S$ or $\rad(abc)=2S$ and $v_2(bc) \geq 2$. Then the 
		Fermat equation 
		\[
			ax^p + by^p + c z^p =0
		\]
		has no solutions other than $xyz=0$ for $p$ larger than $G(a, b, c)$.
	\end{theorem}
	\begin{proof}
		Case $\rad(abc) = S$ follows from Theorem \ref{theorem1} and Lemma \ref{r2}.
		If $v_2(bc)\geq 2$ then either $b$ and $c$ are even or $b$ is odd and 
		$m=v_2(c) \geq 2$.
		The first case follows by Theorem \ref{theorem1} and Lemma \ref{r2}. 
		The second case follows by Theorem \ref{theorem1} for $m\geq 5$, by Theorem 
		\ref{theorem2} for $m=4$  and by Theorem \ref{theorem3} for $2\leq m\leq 3$.
	\end{proof}
	
	\begin{theorem}
		Let $q$, $\ell\geq 5$ be primes.
		Assume one of the following:
		\begin{enumerate}
			\item $(q,\ell) \equiv (-5, 5)$ or $(11,-11) \pmod {24}$.
			\item $q\equiv 11 \pmod{24}$, $\ell\equiv 5\pmod{24}$ and
			$\genfrac (){}{}{q}{\ell} = -1$.
			\item $q\equiv \pm 3 \pmod 8$, $\ell\equiv -1 \pmod {24}$, $\ell\not 
			\equiv -1
			\pmod {q}$.
		\end{enumerate}

		Assume that $\rad(abc) = \ell q$. 
		
		Let $n=0$ or $\geq 4$ then the Fermat equation
		\[
			ax^p + by^p + 2^ n c z^p =0,
		\]
		has no solutions other than $xyz=0$ for $p$ larger than $G(a, b,2^n c)$.
		
		Let $r\geq 1$ then the Fermat equation
		\[
			ax^p + 2^r by^p + 2^ r c z^p =0,
		\]
		has no solutions other than $xyz=0$ for $p$ larger than $G(a, 2^rb,2^r c)$.
	\end{theorem}
	\begin{proof}
		If either $n=0$ or $n\geq 5$ or $r\geq 1$ then this is Theorem \ref{theorem1} with Lemma \ref{mod24}.
		If $n=4$ then this is Theorem \ref{theorem2} with Lemma \ref{mod24}.	
	\end{proof}

	\section{Bounds}\label{sectionbounds}
		Let us recall the explicit bound $G(a, b, c)$ as in Kraus' paper. 
		In the following presentation we relax the bound so that statements are shorter.
		For example, $G(a, b, c)$ is taken so that $a, b, c$ are $p$th-power-free for every $p>G(a, b, c)$.
		
		Let us describe the bound $G(a, b, c)$. Let $N$ be a positive integer and let 
		\begin{align*}
			\mu(N) 	&= [\SL_2(\mathbb Z) : \Gamma_0(N)]=N\cdot \prod_{\text{$\ell\mid N$ prime}}\left(1+\dfrac{1}{\ell}\right),\\
			g(N)		&=\dim_{\mathbb C} S_2^{new}(N),\\
			F(N)		&=\left(\sqrt{\dfrac{\mu(N)}{6}}+1\right)^{2g(N)}.
		\end{align*}
		where $S_2^{new}(N)$ denotes the space of weight $2$ newforms of
		level $N$. 
		Let $a, b, c$ be non-zero integers satisfying $\boldsymbol{(F)}$, 
		$0=v_2(a)\leq v_2(b)\leq v_2(c)$. Let
		\[
		N=
			\begin{cases}
				\rad'(abc)		&\text{if $b$ is odd and $v_2(c)=4$,}\\
				2^3 \,\rad'(abc)	&\text{if $b$ is odd and $v_2(c) = 2,3$,}\\
				2^5\, \rad'(abc)	&\text{if $b$ is odd and $v_2(c)=1$,}\\
				2 \,\rad'(abc)	&\text{otherwise.}\\
			\end{cases}
		\]
		
		If $b$ is odd then $G$ is defined by
		\[
			G(a, b, c) :=\max(F(N), \max_{\text{$q$ prime}}v_q(a),\max_{\text{$q$ prime}}v_q(b),\max_{\text{$q$ prime}}v_q(c)).
		\]
		If $b$ is even, that is $v_2(b) = v_2(c)\geq 1$ then $G$ is defined by
		\[
			G(a, b, c) :=\max(F(N), \max_{\text{$q$ prime}}v_q(a),\max_{\text{$q$ prime}}v_q(b),\max_{\text{$q$ prime}}v_q(c), v_2(c) + 4).	
		\]
		\begin{example}
			Let $S\neq \emptyset$ be a finite set of primes in $1+12\mathbb Z$ and let $a, b, c$ 
			be non-zero, square-free, pairwise coprime integers 
			such that $\rad(abc) = S$. Then 
			\begin{align*}
				N&=2\rad(abc)=2S,\\
				g(N) &= \dfrac{\varphi(S)}{12} + (-1)^{\omega(2S)},\\
				\mu(N) &= 3\prod_{\ell\in S} (\ell+1).
			\end{align*}	
			Here $\varphi$ denotes the Euler's totient function and $\omega(2S)$
			the number of prime divisors of $2S$.
			The dimension $g(N)$ of $S_2^{new}(N)$ has been computed in \cite{Martin}.
		\end{example}

\appendix
\section{Prime divisors of cyclotomic polynomials}\label{appA}

	In this appendix we give some lower bounds for the number of prime divisors of 
	$\ell^n\pm 1$ for integers $\ell \geq 3$ and $n\geq 1$. 
	
	Let $\Phi_n$ be the $n$th cyclotomic polynomial. 
	A usual description of $\Phi_n$
	is given by the formula
	\[
		\Phi_n(X) =\prod_k (X-\zeta_n^k)
	\]
	where $\zeta_n = e^{2\pi i/n}$ is a primitive $n$th root of unity and $k$ ranges over 
	the units of $\mathbb Z/n\mathbb Z$. 
	%That is, $\Phi_n$ is the separable polynomial	with roots consisting in order $n$ elements of $\mathbb C^\times$.
		Gauss proved that $\Phi_n$ is irreducible in $\mathbb Z[X]$, hence $\mathbb Z[X]/\Phi_n \simeq \mathbb Z[\zeta_n]
		\subseteq \mathbb C$ is a domain. In particular
		\begin{equation}\label{factors1}
			X^n-1 = \prod_{d\mid n} \Phi_d(X)
		\end{equation}
		is the factorization of $X^n-1$ in irreducible factors over $\mathbb Z[X]$. 
		Similarly, write $n = 2^mn_2$ where $n_2$ is the largest odd divisor of $n$. 
		Then
		\begin{equation}\label{factors}
			X^n+1 = \prod_{d\mid n_2} \Phi_{2^{m+1}d} (X)
		\end{equation}
		since $X^{2^{m+1}n_2}-1 = (X^n-1) (X^n+1)$. 
%	Also if $d\mid n$ then $\mathbb Z[\zeta_d]\rightarrow \mathbb Z[\zeta_n]$
%	$\zeta_d\mapsto \zeta_n^{n/d}$ defines an inclusion.

%	In this section we are interested in the factorization of $\Phi_n(\ell)$. 
%	The map $\mathbb Z[\zeta_n]\rightarrow A=\mathbb Z/\Phi_n(\ell)$,
%	$\zeta_n\rightarrow \ell$ defines a surjective ring homomorphism.
	
%	Consider the projection 
%	$\pi_{\ell,p}: \mathbb Z[X] \rightarrow \mathbb F_p$ given by $\pi_\ell(X) = \ell\in \mathbb F_p$.
%	Then $\pi_{\ell,p}$ factors through $\mathbb Z[\zeta_n]$ if and only if $p\mid \Phi_n(\ell)$. 
	
	Let $k$ be a positive integer. The map $\mathbb Z[X]\rightarrow \mathbb Z/k\mathbb Z$, $X\mapsto \ell$
	factors through $\mathbb Z[\zeta_n] \rightarrow \mathbb Z/k\mathbb Z$, $\zeta_n\mapsto \ell$
	if, and only if, $k\mid \Phi_n(\ell)$. 
		
	\begin{lemma}\label{lemaA.1}
		Let $p\nmid n$ be a prime and assume that there is a ring homomorphism
		$\theta:\mathbb Z[\zeta_n] \rightarrow \mathbb F_p$. Then $\theta(\zeta_n)$ has order $n$ in $\mathbb F_p^\times$ and $n\mid p-1$. 
	\end{lemma}
	\begin{proof}
		Let $\alpha = \theta (\zeta_n)$. Then $\alpha^n-1 = \prod_d \Phi_d(\alpha)=0$. Notice that $X^n-1$ 
		is separable over $\mathbb F_p$ since $nX^{n-1}\neq 0$ in $\mathbb F_p[X]$.
		Hence $\alpha$ has order $n$ in $\mathbb F_p^\times$ and the lemma follows. 
	\end{proof}

	\begin{lemma}\label{boundvaluation}
		Let $p$ be an odd prime. There is no ring homomorphism $\mathbb Z[\zeta_p] \rightarrow \mathbb Z/p^2\mathbb Z$. 
		There is no ring homomorphism $\mathbb Z[\zeta_4] \rightarrow \mathbb Z/4\mathbb Z$. 
	\end{lemma}
	\begin{proof}
		It is enough to prove that $\Phi_p(X) = \sum_{i=0}^{p-1} X^i$ has no roots in $\mathbb Z/p^2\mathbb Z$. 
		The following proof is standard.
		Assume that there is a root $a$ of $\Phi_p$ in $\mathbb Z/p^2\mathbb Z$. Then $a = 1 \mod p$, since 
		$\Phi_p = (X-1)^{p-1}$ in $\mathbb F_p$. Notice that $\Phi_p(1 + pb ) = \sum_{i=0}^{p-1} 1 + i pb=p$
		in $\mathbb Z/p^2\mathbb Z$ for every $b$. Hence $\Phi_p(a) = p$ for every $a\equiv 1\pmod p$. 
		
		Notice that $\Phi_4(X)=X^2 + 1$ has no roots in $\mathbb Z/4\mathbb Z$. 
	\end{proof}

%	The polynomial $\Phi_n$ is separable over $\mathbb F_p$ if and only if $p\nmid n$. 
%	There are $[\mathbb F_p(\zeta_n):\mathbb F_p]$  homomorphisms $A_n\rightarrow \bar{\mathbb F_p}$ 
%	is isomorphic to $(\mathbb Z/n\mathbb Z)^\times$. 

	\begin{lemma}\label{lemaA.3}
		Let $\ell\geq 3$, $n\geq 2$ be integers and let $p$ be the largest prime divisor of $n$, 
		then $|\Phi_n(\ell)|> p$.  
	\end{lemma}
	\begin{proof}
		The Euler's totient function $\varphi$, 
		%$\varphi(n):=\deg \Phi_n$,
		satisfies that 
		\[
			p-1\mid \varphi(n).
		\]
%		The restriction of $\mathbb Z/n\mathbb Z\rightarrow \mathbb F_p$ to groups of units
%		is a surjective group homomorphism. In particular $\deg \Phi_n\geq p-1$.
		Hence 
		\[
		| \Phi_n(\ell) |= \prod_k |\ell-\zeta_n^k| \geq \prod_k 2 \geq  2^{p-1}
		\]
		and case $p\geq 3$ follows. 
		
		If $p=2$ then $n$ is a power of $2$, $n= 2^m$, and 
		\[
			\Phi_n(\ell) = \ell^{2^{m-1}}+1> 2.
		\]
	\end{proof}
	The polynomial $\Phi_n$ has no real roots for $n\geq 3$, hence $|\Phi_n(\ell)| =\Phi_n(\ell)$.
	
	\begin{theorem}\label{1805}
		Let $\ell \geq 3, n\geq 3$ be integers. 
		There is a prime divisor $p$ of $\Phi_n(\ell)$ not dividing $2n$. Hence, $\ell$ has order $n$
		in $\mathbb F_p^\times$. 
	\end{theorem}
	\begin{proof}
		\emph{Case $n=2^m\geq 4$.}\\
		One has that $\Phi_{2^m}(X)= X^{2^{m-1}}+ 1$ and $\Phi_n(\ell) \geq 10$. 
		If $4\mid \Phi_n(\ell)$ then $\mathbb Z[\zeta_n]\rightarrow \mathbb Z/4\mathbb Z$,
		$\zeta_n\mapsto \ell$ defines a ring homomorphism that restricts to $\mathbb Z[\zeta_4]\subseteq
		\mathbb Z[\zeta_n]$. 
		This contradicts Lemma \ref{boundvaluation}. Hence either $\Phi_n(\ell)$ is odd or $\Phi_n(\ell)/2\geq 5$ is odd.
		
		\emph{Case $p\mid n$, $p$ odd.}\\
		Notice that $\Phi_n(\ell)$ is odd. Indeed, if $2\mid \Phi_n(\ell)$ then there exists a ring homomorphism 
		\[
			\mathbb Z[\zeta_n]\rightarrow \mathbb F_2
		\]
		which induces by restriction a map 
		\[
			\mathbb Z[\zeta_p]\rightarrow \mathbb F_2
		\]
		hence $p\mid 2-1$ by Lemma \ref{lemaA.1}. 
		
		Let us see that either $\Phi_n(\ell)$ and $n$ are coprime 
		or there is a prime $p$ such that $\Phi_n(\ell)/p, n$ are coprime. Assume that $p<q$
		are prime divisors of $\Phi_n(\ell)$ and $n$.
		Then there is a ring homomorphism
		\[
			\mathbb Z[\zeta_q] \subseteq \mathbb Z[\zeta_n] \rightarrow \mathbb F_p
		\]
		and $q\mid p-1$ by Lemma \ref{lemaA.1} which contradicts $p<q$. Hence the greatest
		common divisor of $\Phi_n(\ell)$ and $n$ is a possibly trivial power of an odd prime $p$.
		If $p\mid n$ then $p^2\nmid \Phi_n(\ell)$ by Lemma \ref{boundvaluation}. 
		Hence either $\Phi_n(\ell), 2n$ are coprime or there is an odd prime divisor $p$
		of $\Phi_n(\ell)$ such that $\Phi_n(\ell)/p$ and $2n$ are coprime. In the second case
		$\Phi_n(\ell)/p$ is an odd integer $>1$ by Lemma \ref{lemaA.3} and the first part of the theorem 
		follows. The order of $\ell$ is computed in Lemma \ref{lemaA.1}.
	\end{proof}
	
	\begin{corollary}\label{primedivisors}
		Let $\ell \geq 3$. Assume that $n_1, \dots, n_r$ are pairwise different integers $\geq 3$. 
		Then 
		\[
		\prod_i \Phi_{n_i}(\ell)
		\]
		has at least $r$ odd prime divisors.
	\end{corollary}

	\begin{proof}
		Let $p_i$ be a prime divisor of $\Phi_{n_i}(\ell)$ coprime to $2n_i$ as in Theorem \ref{1805}. 
		Then $\ell$ has order $n_i$ in $\mathbb F_{p_i}^\times$, thus $p_i\neq p_j$ for different $i, j$.
	\end{proof}
	
	For an integer $k$ let $\omega(k)$ denote the number of prime divisors of $k$
	and let $\sigma(k)$ denote the number of divisors of $k$.
	
	\begin{corollary}\label{corminus}
		Let $\ell\geq 3$, $n\geq 1$ be integers. If $(\ell, n)\neq (3,even)$ then
		\[
			\omega(\ell^n-1)\geq \sigma(n).
		\]
		Otherwise
		\[
			\omega(3^{2t}-1)\geq \sigma(2t)-1.
		\]

	\end{corollary}
	
	\begin{proof}
		Let $i\in \{1,2\}$ such that $n \equiv i \pmod 2$. Then
		\[
			A:=\prod_{\substack{\text{$d\mid n$}\\ \text{$d\geq 3$}}}\Phi_d(\ell)=\dfrac{\ell^n-1}{\ell^i-1}
		\]
		has at least $\sigma(n)-i$ odd prime divisors $S=\{p_d\}_{d\mid n, d\geq 3}$ as in 
		Theorem \ref{1805}.
		Notice that $p_d\nmid \ell^i-1$ for every $p_d\in S$. Indeed, if an odd prime $p$ divides
		$\ell^i-1$ then $\ell$ has order $\leq i$ in $\mathbb F_{p}^\times$ by Lemma \ref{lemaA.1}. Thus 
		\[
		\omega(\ell^n-1) \geq  \sigma(n) - i + \omega(\ell^i-1).
		\]
		
		It is enough to prove that $\omega(\ell^i-1) \geq i$ if and only if $(\ell,i) \neq (3,2)$. 
		If $i=1$ then $\ell-1\geq 2$ and $\omega(\ell-1) \geq 1$. If $i=2$ then $gcd(\ell-1,\ell+1)\leq 2$.
		Assume $\omega(\ell^2-1) < 2$ then $\ell-1, \ell+1$ are powers of two. Hence $\ell=3$. 
	\end{proof}
	
%	Let $t$ be a positive odd integer and let $m$ be positive. The roots of $X^{2^m t}+1$
%	are the $t2^{m+1}$th roots of unity that are not $t2^{m}$th roots of unity. Hence
%	\begin{equation}\label{factors}
%		X^{2^mt}+1 = \prod_{d\mid t} \Phi_{2^{m+1} d}(X).
%	\end{equation}
	\begin{corollary}\label{corplus}
		Let $\ell\geq 3$, $n\geq 1$ be integers and let $n_2$ be the largest
		odd divisor of $n$. Then
		\[
			\omega(\ell^n+1)\geq \sigma(n_2).
		\]\
	\end{corollary}

	\begin{proof}
	Let $n=2^m n_2$ then $\ell^n+1 = \prod_{d\mid n_2} \Phi_{2^{m+1} d}(\ell)$ by 
	the polynomial factorization of \eqref{factors}. 
	For every $d$ such that $2^{m+1} d\neq 2$ consider a prime $p_d\mid \Phi_{2^{m+1} d}(\ell)$ 
	as in Theorem \ref{1805}. 
	If $m=0$ let $p_1$ be an arbitrary prime divisor of $\Phi_{2}(\ell)=\ell+1$. 
	Then $\prod_{d\mid n_2} p_d$ is a squarefree divisor of $\ell^n+1$. 
	\end{proof}	
	
	\subsection{Catalan Conjecture}
	
	One deduces a case of Catalan's Conjecture.

	\begin{theorem}[Partial Catalan's Conjecture]\label{partialcatalan}
		Let $\ell\geq 3$ be an integer and assume that 
		\[
			2^m - \ell ^n \in \{\pm1\}
		\]
		for some integers $m, n\geq 2$. Then $m=\ell=3$, $n=2$.
	\end{theorem}
	\begin{proof}
		Assume that $2^m = \ell^n+1$, $n\geq 2$ and let $n_2$ be the largest odd divisor of $n$.
		Then $\ell$ is odd and $\ell^n+1 \geq 4$. 
		By Corollary \ref{corplus} we have that $1=\omega(\ell^n +1)\geq \sigma(n_2)$, hence $n_2=1$ and
		$n=2^r$ for some positive $r$. Since $2^m=\ell^{2^r}+1 \equiv 2 \pmod 4$ one has that $m=1$ and 
		$2= \ell^{2^r} + 1$. 
		
		Assume that $2^m = \ell^n-1$, $n\geq 2$. If $(\ell,n)=(3, 2t)$ with $t$ an integer then
		$1=\omega(3^{2t}-1) \geq \sigma(2t)-1$ by Corollary \ref{corminus}. Hence $t=1$.
		
		If $(\ell, n)\neq (3, even)$, by Corollary \ref{corminus} one has that $1=\omega(\ell^n-1) \geq \sigma(n)$. 
		Hence $n=1$. 
	\end{proof}

	This partial result is well known to experts, see \cite[B3.3]{Ribenboim}. 
	See \emph{ibid} for a complete treatment of Catalan's conjecture written before 
	Preda Mih\u{a}ilescu's proof \cite{Mihailescu}. See also Bilu - Bugeaud - Mignotte's book \cite{Bilu} 
	for a \emph{minimalistic} approach of the proof or Schoof's book \cite{Schoof} based on two 
	sets of lecture notes by Yuri Bilu.

\section{The conductor of \texorpdfstring{$E[p]$}{E[p]}}\label{levelcomputation}
	The $j$-invariant of a Frey curve is given by the formula
		\[
			j_E=\dfrac{2^8 (C^2 - AB)^3}{A^2 B^2 C^2}.
		\]
	Thus one has for the case $(A,B,C) = (ax^p , by^p ,cz^p)$ being pairwise coprime that $C^2-AB$ and $ABC$ are coprime.
	Let $\ell$ be a prime divisor of $ABC$. Then
	\[
		v_\ell(j_E) = 8 v_\ell(2) - 2v_\ell(ABC) \equiv 8v_\ell(2) - 2v_\ell(abc)\pmod p.
	\]
	Thus $p\mid v_\ell(j_E)$ if and only if 
	\begin{itemize}
		\item $\ell$ is odd and $p\mid v_\ell(abc)$,  or
		\item $\ell=2$ and $v_2(abc) \equiv 4 \pmod p$. 
	\end{itemize}
	\begin{proposition}
		Let $E=E_{A,B,C}$ be the Frey curve as in Theorem \ref{newform}. Let $f$
		be  a newform in $S_2(M)$ for some divisor $M$ of $2^s\rad'(abc)$ and let $\mathfrak p$ be a prime ideal such that 
		\[
			E[p] \simeq \bar{\rho}_{f,\mathfrak p}
		\]
		as $\mathbb F_p[G_{\mathbb Q}]$-modules. Then $M= 2^s \rad'(abc)$.
	\end{proposition}
	\begin{proof}
		Let $R$ be the largest (square-free) divisor of $2^s\rad'(abc)$ coprime to $2p$.
		By Tate's uniformization $E[p]$ is ramified at every prime divisor $\ell$ of $R$
		and so is $\bar{\rho}_{f,\mathfrak p}$. Thus, $R\mid M$. 
		
		Let $\ell=2$. If $s \in\{ 3,5\}$ then Carayol \cite{Carayol89} predicts that the lifting 
		$\rho_{f,\mathfrak p}$ of $\bar{\rho}_{f,\mathfrak p}$ has 
		conductor exponent $s$. Thus $2^s\mid M$. 
		If $s=0$ then $M$ is odd and so is $R$. If $s=1$ then $E[p]$ is ramified at $2$ and so is $\bar\rho_{f,\mathfrak p}$.
		Hence $2\mid M$. 
		
		One could just avoid case $p\mid M$ since we will consider big primes $p$ with respect to $\rad(abc)$.
		Still, if $p\mid \rad'(abc)$ then $E[p]$ is not finite at $p$. That is, $E[p]\vert_{G_p}$ is reducible and not 
		\emph{peu ramifié} by \cite[Proposition 8.2]{Edixhoven}. If $p\nmid M$ then $\bar{\rho}_{f,\mathfrak p}\vert_{G_p}$
		is either irreducible or reducible and \emph{peu ramifié}. Thus $E[p]\vert_{G_p}\not\simeq \bar{\rho}_{f,\mathfrak p}\vert_{G_p}$. 
		This completes the proof.
	\end{proof}

	\section{Mod \texorpdfstring{$24$}{24} exercises}\label{mod24ex}
	\begin{proof}[Proof of Lemma \ref{mod24}:]
		Let $(A,B,C)$ be a primitive $S$-unit point of height $\geq 3$. Assume $A=2^r$, $r\geq 3$.
		Then $B+C\equiv 0\pmod 8$ and $B+C\not\equiv 0\pmod 3$. Hence,
		\[
			BC\equiv -1 \pmod 8
		\]
		since $C^{-1} \equiv C \pmod 8$
		and
		\[
			BC\equiv 1 \pmod 3
		\] 
		since $B,C\in \{\pm 1\} \mod 3$.
		Thus
		\[
			\pm q^s \ell^t = BC  \equiv 7 \pmod{24}.
		\]
		
		\begin{enumerate}
			\item 			
			By hypothesis $(q,\ell) \equiv (-5, 5)$ or $(11,-11)\pmod{24}$. 
			Notice that  
			\[
				q^s \ell^t\equiv \pm q^{s+t}\not\equiv \pm 7  \pmod{24},
			\] 
			hence $A$ is not a power of two.
			
			Assume that 
			\[
				0\equiv 2^r q^{s} = \ell^t + \varepsilon\equiv (- 3)^t +\varepsilon\pmod 8
			\] 
			for some $\varepsilon\in \{\pm1\}$. 
%			The equation $\ell^{t}+\varepsilon \equiv 0 \mod 8$ implies 
			Then $\varepsilon=-1$ and $t$ is
			even. Proposition \ref{completesol} implies 
			\[
				(q,\ell) \in\{(3,5),(5,3),(3,7),(3,17)\}.
			\]
			Condition $q\equiv -\ell\pmod{24}$ leads to a contradiction. Similarly, $2^r \ell^t = q^r+\varepsilon$ has no solution.

			\item Assume that $(2^r,- q^s \ell^t, \varepsilon)$ 
			is an $S$-point for some unit $\varepsilon$. Then 
			$- \varepsilon q^s \ell^t\equiv 7\pmod{24}$. Thus $s,t$ are odd and $\varepsilon = -1$.
			That is
			\[
			2^r = q^s \ell^t + 1\equiv -1\pmod 3,
			\]
			hence $r$ is odd, $r=2f+1$. Thus,
			$2$ is a square in $\mathbb F_q$, i.e. $q\equiv \pm 1\pmod 8$. 
			Indeed 
			\[
			\genfrac (){}{0}{1}{q}=\genfrac (){}{0}{2}{q}^r = \genfrac (){}{0}{2}{q}.
			\]
			Assume that $2^r +(-1)^{a}q^{s} +(-1)^{b}\ell^{t}=0$.
			Then 
			\[
				(-1)^{a+b} q^s \ell^t \equiv 7\pmod{24}.
			\] 
			Hence $a,b$ have same parity and $s,t$ are odd.
			Thus 
			\[
				2^r = q^s + \ell^t\equiv 1\pmod 3
			\] 
			and $r$ is even. Thus $q$ is a square in $\mathbb F_\ell$.
			
			Assume that $(2^r q^s , -\ell^t , \varepsilon)$ is an $S$-point. 
			Then $\ell^t \equiv \varepsilon \pmod 8$ and hence $t$ is even and $\varepsilon =1$.
			
			Assume that $(2^r \ell^t , -q^s , \varepsilon)$ is an $S$-point. Then $\varepsilon =1$
			and $s$ is even. By Proposition \ref{completesol} $q\in \{ 3,5,7,17\}$, 
			hence 
			\[
				q\not \equiv 11 \pmod{24}.
			\]
			
			\item By hypothesis 
			\[
				\ell \equiv -1 \pmod{24}
			\] 
			and $q\equiv \pm 5$ or $\pm 11 \pmod{24}$ since $q \geq 5$. 
			Thus $q^s \ell^t\not\equiv  \pm 7 \pmod{24}$ and $A$ is not a power of two. 
			
			Assume that $2^r q^{s} = \ell^{t} +1$.
			Then $t$ is either $1$ or an odd prime by Lemma \ref{technicallemma3}. 
			Case $t=1$ implies $\ell \equiv -1 \pmod q$. Case $t$ odd prime implies
			$\ell$ Mersenne hence 
			\[
			\ell\equiv0, 1\pmod 3.
			\]

			Assume that $2^r q^{s}=\ell^{t} -1$.
			Hence $t$ is even and Proposition
			\ref{completesol} implies $\ell \in \{3,5,7, 17\}$, then $\ell\not\equiv -1\pmod{24}$.
			Similarly, case $2^r \ell^s = q^t\pm 1$ is not allowed by Lemma \ref{completesol}.
		\end{enumerate}
	\end{proof}

\end{document}